\begin{document}

\title{Shape identification and classification in echolocation\thanks{\footnotesize This work was supported  by the
ERC Advanced Grant Project MULTIMOD--267184.}}
\author{Habib
Ammari\thanks{\footnotesize Department of Mathematics and
Applications, Ecole Normale Sup\'erieure, 45 Rue d'Ulm, 75005
Paris, France (habib.ammari@ens.fr, tran@dma.ens.fr,
han.wang@ens.fr).}, Minh Phuong Tran\footnotemark[2], \and Han
Wang\footnotemark[2]}

\maketitle

\begin{abstract}
  The paper aims at proposing the first shape identification and classification algorithm
  in echolocation. The approach is
based on first extracting geometric features from the reflected waves
 and then matching them with precomputed ones associated with
a dictionary of targets. The construction of such frequency-dependent shape
descriptors is based on some important properties of the scattering coefficients
 and new invariants. The stability and resolution
of the proposed identification algorithm with respect to measurement noise and the limited-view aspect
are analytically and numerically
quantified.
 \end{abstract}
\bigskip

\noindent {\footnotesize Mathematics Subject Classification
(MSC2000): 35R30, 35B30.}

\noindent {\footnotesize Keywords: echolocation, shape
classification, scattering coefficients, frequency-dependent shape descriptors.}

\begin{flushleft}
•
\end{flushleft}\section{Introduction}
\label{sec:intro}

Echolocation is a form of acoustics that uses active sonar to locate and identify targets. Many animals, such as bats and dolphins, use this method to avoid collisions, to select, identify, and attack preys, 
and to navigate by emitting sounds and then analyzing the reflected waves \cite{nature2,nature1}. Animals with the ability of echolocation rely on multiple receivers to allow a better perception of the targets’ distance and direction. By noting a difference in sound level and the delay in arrival time of the reflected sound, the animal determines the size and the location of the target. Moreover, the echolocating animal is able to learn how
to identify certain targets and discriminate them from all other
targets. As far as we know, shape identification and classification in echolocation is not well understood yet. It is the purpose of this paper to develop the first shape identification algorithm in echolocation. 
We explain how to identify
and classify a target, knowing by advance that the latter belongs
to a certain collection of shapes. The model relies on
differential imaging, {\it i.e.}, by forming an image from the
echoes due to targets at multiple frequencies, and physics-based classification.
Considering a multistatic configuration,  we first locate the target
using  a specific location search algorithm. Then, we extract frequency-dependent scattering coefficients of the target from the perturbation of the echoes using a least-squares algorithm. Computing the invariants under rigid motions and scaling from the extracted features yields shape descriptors. Finally, a target is classified by comparing its invariants with those of a
set of learned shapes at multiple frequencies.

The main application we have in mind is an improved autonomous navigation using acoustic waves.  It is challenging to equip autonomous robots with
echolocation perception and provide them, by mimicking echolocating animals, with acoustic imaging and classification capabilities \cite{nature3}. 
 
 The present paper extends our recent results on shape description and perception in electrosensing \cite{ammari_modeling_2012, ammari_target_2012}. For the conductivity equation, the concept of generalized polarization tensors \cite{ammari_boundary_2004, ammari_reconstruction_2004,  AKLZ12, yves} has been successfully used for shape identification and classfication \cite{ammari_target_2012, ammari_shape_2013, ammari_generalized_2011}. Invariants with respect to rigid transformations and scaling for the generalized polarization tensors in two and three dimensions have been derived \cite{ammari_target_2012, ammari_invariance_2012}.

 The paper is organized as follows. In Section 2 we first recall 
  the small-volume framework \cite{ammari_reconstruction_2004,capdeboscq_review_2004, CMV98, seo,VV} for imaging acoustic targets.  In  Section 3  we recover the scattering coefficients from multistatic measurements 
using a least-squares method and estimate the resolving power of the reconstruction. 
 In Section 4
  we construct shape descriptors for multiple frequencies based on 
scaling, rotation, and translation properties of
the scattering coefficients.  In
Section 5, we illustrate numerically on test examples the perfomance of the proposed classification algorithm and its stability with respect to measurement noise and the limited-view aspect. A few concluding remarks are given in the last
section.

\section{Problem formulation}
Let $D \subset \mathbb{R}^2$ be a bounded domain with Lipschitz boundary $\partial D$. We denote by
$\ve_*$ the electric permittivity and $\mu_*$ the magnetic permeability of $D$, and $\ve_0,
\mu_0$ the electric permittivity and the magnetic permeability of the background medium $\R^2
\setminus \bar D$, respectively. Then we introduce
\begin{align}
  \ve  = \left\{ \begin{array}{ll}
      \ve_0&x \in \mathbb{R}^2 \setminus \bar D, \\
      \ve_*&x \in D,
    \end{array} \right. , \ \
  \mu  = \left\{ \begin{array}{ll}
      \mu_0&x \in \mathbb{R}^2 \setminus \bar D, \\
      \mu_*&x \in D,
    \end{array} \right. \label{defepsmu}
\end{align}
and set $k = \omega\sqrt{\ve_*\mu_*}$ and $k_0 =
\omega\sqrt{\ve_0\mu_0}$, where $\omega$ is the operating
frequency.

\subsection{Helmholtz equation}
Let $\Gamma_k$ be the fundamental solution to the Helmholtz equation:
\begin{equation}
  \label{eq:Helm}
  (\Lap+k^2) \Gamma_k(x) = \delta_0
\end{equation}
subject to the Sommerfield outgoing radiation condition. In two
dimensions, $\Gamma_k$ is given by
\begin{equation}
  \Gamma_k(x) = -\frac{i}{4}H_0^{(1)}(k\abs{x})
\end{equation}
with $H_0^{(1)}$ being the Hankel function of the first kind of
order 0.

We consider the scattered wave $u$, that is,  the solution to
following equation:
\begin{equation}
  \label{eq:uUsol}
  \begin{cases}
    \Grad \cdot \displaystyle{\frac{1}{\mu}} \Grad u + \omega^2 \ve u = 0 \qquad  {\rm in}\,\, \R^2,\\
    (u-U)\,\,\, {\rm satisfies \,\, the \,\, outgoing\,\, radiation\,\,
    condition},
  \end{cases}
\end{equation}
where $U$ is a solution to $(\Lap+k_0^2)U = 0$.

\subsubsection{Layer potentials and representation of the solution}
\label{sec:layer-potent-repr}

The solution $u$ to \eqref{eq:uUsol} admits an integral
representation. For this, we introduce the single layer potential
as follows:
\begin{equation}
  \label{SDk}
  \Sglkf D \phi (x) = \int_{\partial D} {\Gamma_k(x,y)\phi(y)d\sigma(y)}, \qquad x\in
  \mathbb{R}^2,
\end{equation}
where $\phi \in L^2(\partial D)$. The following jump formula holds \cite{ammari_reconstruction_2004}:
\begin{equation}
  \label{eq:SDk_nderiv}
  \displaystyle{\left. \frac{\partial \mathcal{S}_D^{k}[\phi] }{\partial \nu} \right|_\pm} (x) =
  \left( \pm\displaystyle{\frac{1}{2}}I + (\mathcal{K}^k_D)^* \right)[\phi](x) \qquad {\rm a.e.}
  \quad x \in \partial D,
\end{equation}
where $f|_+$ and $f|_-$ denote respectively the limit on $\partial D$ from the outside and the
inside of $D$ along the normal direction, and $\p/\p \nu$ denotes the normal
derivative. Here, $\Kstar D$ is the Neumann-Poincar\'e operator defined by
\begin{equation}
  \label{eq:KDk*}
  (\mathcal{K}^k_D)^*[\phi](x) = \int_{\partial D} {\displaystyle{\frac{\partial
  \Gamma_k(x,y)}{\partial \nu_x}}\phi(y)d\sigma(y)}
\end{equation}
with $\nu_x$ being the outward normal vectors of $\partial D$ at
$x$. Then the following system admits a unique solution
$(\varphi,\psi) \in L^2(\partial D) \times L^2(\partial D)$
\cite{ammari_boundary_2004}:
\begin{equation}
  \label{eq:psiphisys}
  \begin{cases}
    \Sglkf D \varphi - \Sglknf D \psi  = U & \text{ on } \p D, \\
    \displaystyle{ \frac{1}{\mu_*} \left.{\ddn{\Sglkf D \varphi}}\right|_{-} - \frac{1}{\mu_0}
      \left.{\ddn{\Sglknf D \psi}}\right|_{+} = \frac{1}{\mu_0} \ddn {U}} & \text{ on } \p
      D,
  \end{cases}
\end{equation}
provided that $k_0^2$ is not a Dirichlet eigenvalue for $-\Delta$ on $D$. 
Moreover, the solution $u$ to \eqref{eq:uUsol} can be written as
\begin{equation}
  \label{eq:solu}
  u(x) =
  \begin{cases}
    U(x) + \Sglknf D \psi (x), \qquad x \in \mathbb{R}^2 \setminus \bar D, \\
    \Sglkf D \varphi (x), \qquad x \in D.
  \end{cases}
\end{equation}

\subsubsection{Asymptotic expansion}
\label{sec:asymptotic-expansion} We first recall Graf's formula
\cite{abramowitz_handbook_1964}:
\begin{equation}
  \label{eq:graf}
  H_0^{(1)}(k|x-y|) = \sum\limits_{n \in \mathbb{Z}}
  {H_n^{(1)}(k|x|)e^{in\theta_x}J_n(k|y|)e^{-in\theta_y} } \ \text{ for }
  |x|>|y|,
\end{equation}
where $H_n^{(1)}$ is the Hankel function of the first kind of
order $n$ defined by
\begin{align}
  \label{eq:Hankel_n_def}
  H_n^{(1)}(x) = J_n(x) + i Y_n(x),
\end{align}
and $J_n, Y_n$ are the Bessel function of the first and the second kind of order $n$, respectively.

Then by plugging Graf's formula into \eqref{eq:solu}, we obtain an
asymptotic form of $u-U$ as $|x| \to \infty$:
\begin{equation}
  \label{eq:umU}
  u(x) - U(x) = -\displaystyle{\frac{i}{4}}\sum\limits_{n \in \mathbb{Z}}
  {H_n^{(1)}(k_0|x|)e^{in\theta_x}\int_{\partial D}J_n(k_0|y|)e^{-in\theta_y}\psi(y)d\sigma(y)}.
\end{equation}
In the following, we use $\cwv_m$ to denote the cylindrical wave
of index $m\in\Z$ and of wave number $k_0$, which is defined by
\begin{align}
  \label{eq:cylind_wav}
  \cwv_m(x)=\cwv_{m,k_0}(x):=J_m(k_0|x|)e^{im\theta_{x}}.
\end{align}

\subsection{Scattering coefficients}
\subsubsection{Definition}
Let $(\varphi_m, \psi_m)$ be the solution to \eqref{eq:psiphisys} with the cylindrical wave
$\cwv_{m}$ as the source term $U$. We introduce the \emph{scattering coefficients} as follows
\cite{ammari_enhancement_2011-2}.
\begin{dfn}
  The scattering coefficients $W_{mn}$ ($m,n \in \mathbb{Z}$) associated with the permittivity and
  permeability distributions $\ve, \mu$ given by (\ref{defepsmu}) and the frequency $\omega$ are defined by
  \begin{equation}
    \label{eq:sct}
    W_{mn} = W_{mn}[D] = W_{mn}[D, \ve, \mu, \omega] := \int_{\partial D}
    {\widebar{\cwv_n(y)} \psi_m(y)d\sigma(y)}.
  \end{equation}
\end{dfn}

The coefficient $W_{mn}$ decays very fast as the orders $m,n$
increase. In fact, it has been proved in
\cite{ammari_enhancement_2011-2} that there is a constant $\CW$
depending on $(D, \ve, \mu, \omega)$ such that
\begin{align}
  \label{eq:control_SCT}
  \abs{W_{mn}[D, \ve,\mu,\omega]} \leq \frac{\CW^{\abs m + \abs n}}{\abs{m}^{\abs m} \abs{n}^{\abs
      n}}\,\, \text{ for all } m,n\in\Z.
\end{align}


\subsubsection{Transformation formulas}
\label{sec:SCT_transform_formulae} We introduce the notation for
translation, scaling and rotation of a vector $x$ as $x^z:=x+z,
x^s:=sx, x^\theta:=R_\theta x$, and those of a shape as $$D^z:=D+z,
D^s:=sD, D^\theta:=R_\theta D,$$ and define new functions $f^z,
f^\theta, f^s$ so that
$$f^z(x^z)=f^\theta(x^\theta)=f^s(x^s)=f(x).$$

Explicit relations exist between the scattering coefficients
of $D$ and $D^z, D^s, D^\theta$. We will need the following
technical lemma:
\begin{lem}
  \label{lem:psi_Dz_Uz}
  We denote by $\psif{U,D}=\psif{U,D,\omega}$ the solution to \eqref{eq:psiphisys} given the domain
  $D$, the source term $U$ and the frequency $\omega$. Then for any $z\in\R^2$, $\theta\in\R$ and
  $s>0$:
  \begin{align}
    \label{eq:psi_Dz_Uz}
    \psif{U,D}(x) = \psif{U^z, D^z}(x^z) = \psif{U^\theta, D^\theta}(x^\theta) = s \psif{U^s, D^s,
      \omega/s}(x^s), \, \forall x\in\p D.
  \end{align}
\end{lem}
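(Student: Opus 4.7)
The plan is to handle each of the three transformations by the same three-step template: transport the pair $(\varphi,\psi)=(\varphi_{U,D,\omega},\psi_{U,D,\omega})$ to a candidate pair on the transformed boundary, verify that this candidate solves the system \eqref{eq:psiphisys} associated with the transformed data, and conclude by the uniqueness statement that the candidate must coincide with the solution pair $(\varphi_{U^*,D^*,\omega^*}, \psi_{U^*,D^*,\omega^*})$.

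\textbf{Translation and rotation.} The fundamental solution $\Gamma_k(x)=-\tfrac{i}{4}H_0^{(1)}(k|x|)$ depends only on $|x|$, hence is invariant under $x\mapsto x+z$ and $x\mapsto R_\theta x$. Setting $\tilde\varphi(y^z):=\varphi(y)$ and $\tilde\psi(y^z):=\psi(y)$, a change of variables in \eqref{SDk} gives $\mathcal{S}_{D^z}^k[\tilde\varphi](x^z)=\mathcal{S}_D^k[\varphi](x)$, and since translation carries outward unit normals of $D$ to outward unit normals of $D^z$, the same identity holds for both one-sided normal derivatives. Substituting into the system \eqref{eq:psiphisys} on $\partial D^z$ with source $U^z$ reproduces the original system on $\partial D$, so by uniqueness $\tilde\psi=\psi_{U^z,D^z}$. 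The rotation case is literally the same argument with $R_\theta$ in place of the translation, using that $R_\theta$ preserves Euclidean distance and sends $\nu$ on $\partial D$ to $\nu$ on $\partial D^\theta$.

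\textbf{Scaling.} Here I propose $\tilde\varphi(y^s):=\varphi(y)/s$, $\tilde\psi(y^s):=\psi(y)/s$ as the densities on $\partial D^s$, paired with the rescaled frequency $\omega/s$ (so the interior/exterior wave numbers become $k/s,\, k_0/s$). Three scaling effects then conspire: (i) arclength on $\partial D^s$ contributes a Jacobian $s$, (ii) the identity $\Gamma_{k/s}(s(x-y))=\Gamma_k(x-y)$ holds because the Hankel function sees only the product of wave number and distance, and (iii) the candidate density carries a factor $1/s$. Together they yield $\mathcal{S}_{D^s}^{k/s}[\tilde\varphi](x^s)=\mathcal{S}_D^k[\varphi](x)$, so the first line of \eqref{eq:psiphisys} with $U^s(x^s)=U(x)$ reduces to the original first line. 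For the second line, the chain rule applied to $F(x^s):=\mathcal{S}_{D^s}^{k/s}[\tilde\varphi](x^s)$ gives $\partial_\nu F|_\pm(x^s)=(1/s)\,\partial_\nu\mathcal{S}_D^k[\varphi]|_\pm(x)$, and similarly $\partial_\nu U^s(x^s)=(1/s)\,\partial_\nu U(x)$. Hence the entire transmission condition on $\partial D^s$ is exactly the original one multiplied through by $1/s$, and is therefore satisfied. Uniqueness gives $\psi_{U^s,D^s,\omega/s}(x^s)=\psi(x)/s$, which rearranges to the third identity in \eqref{eq:psi_Dz_Uz}.

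\textbf{Main obstacle.} The translation and rotation steps are pure bookkeeping. The delicate step is the dilation: three different $s$-dependent contributions (the arclength Jacobian, the argument rescaling inside $H_0^{(1)}$, and the chain-rule factor in the normal derivative) must combine correctly both on the integral equation and on the jump relation. Any inconsistency in how one absorbs these factors into $\tilde\varphi,\tilde\psi$ and the frequency $\omega/s$ would produce a wrong prefactor; getting them to balance pins down the exponent to $s^1$, yielding the factor $s$ in front of $\psi_{U^s,D^s,\omega/s}$ in the statement.
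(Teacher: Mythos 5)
Your proof is correct and follows essentially the same route as the paper: transport the densities to the transformed boundary, verify they solve the transformed system via the scaling/translation covariance of the single layer potential and its normal derivative, and conclude by uniqueness. The only (cosmetic) difference is in the dilation step, where you absorb the factor $1/s$ into the candidate densities so that the source is exactly $U^s$, whereas the paper keeps the densities unrescaled, obtains the system with source $sU^s$, and then divides by $s$ using linearity.
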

\begin{proof}
  We first prove  $\psif{U,D}(x) = \psif{U^z, D^z}(x^z)$. The following equalities are easily
  verified:
  \begin{align}
    \label{eq:Sgl_Kstar_Dz}
    \Sglkf{D^z}{\phi^z}(x^z) &= \Sglkf{D}{\phi}(x) \, \text{ for } x \in \R^2 ,\\
    \Kstarf{D^z}{\phi^z}(x^z) &= \Kstarf{D}{\phi}(x) \, \text{ for } x\in \p D
    \notag,
  \end{align}
  which imply, using the jump formula \eqref{eq:SDk_nderiv}, that if $(\varphi, \psi)$ is the
  solution to \eqref{eq:psiphisys}, $(\varphi^z, \psi^z)$ will be the solution to the following
  system:
  \begin{equation}
    \label{eq:psiphisys_Dz}
    \begin{cases}
      \Sglkf{D^z}{\varphi} - \Sglknf{D^z}{\psi}  = U^z & \text{ on } \p D^z, \\
      \displaystyle{ \frac{1}{\mu_*} \left.{\ddn{\Sglkf{D^z}{\varphi}}}\right|_{-} - \frac{1}{\mu_0}
        \left.{\ddn{\Sglknf{D^z}{\psi}}}\right|_{+} = \frac{1}{\mu_0} \ddn {U^z}} & \text{ on } \p D^z.
    \end{cases}
  \end{equation}
  Hence we obtain $\psif{U^z, D^z}(x) = \psi^z(x)$ and so forth the first identity in
  \eqref{eq:psi_Dz_Uz}. The proof of the identity $\psif{U,D}(x) = \psif{U^\theta,
    D^\theta}(x^\theta)$  follows analogous arguments and is skipped for brevity.
    
  We prove now $\psif{U,D}(x) = s \psif{U^s, D^s, \omega/s}(x^s)$. Similarly, the following
  equalities can be verified:
  \begin{align*}
    s^{-1} \Sglf{D^s}{k/s}{\phi^s}(x^s) &= \Sglkf{D}{\phi}(x) \, \text{ for } x \in \R^2, \  \\
    \frac{\p \Sglf{D^s}{k/s}{\phi^s}(x^s)}{\p \nu} \bigg|_{\pm} &= \frac{\p \Sglkf{D}{\phi}(x)}{\partial \nu} \bigg|_{\pm}  \, \text{ for }
    x\in \p D,
  \end{align*}
  which implies that if $(\varphi, \psi)$ is the solution to \eqref{eq:psiphisys}, then $(\varphi^s,
  \psi^s)$ will be the solution to the following system:
  \begin{equation}
    \label{eq:psiphisys_Ds}
    \begin{cases}
      \Sglf{D^s}{k/s}{\varphi} - \Sglf{D^s}{k_0/s}{\psi}  = sU^s & \text{ on } \p D^s, \\
      \displaystyle{ \frac{1}{\mu_*} \left.{\ddn{\Sglf{D^s}{k/s}{\varphi}}}\right|_{-} -
        \frac{1}{\mu_0} \left.{\ddn{\Sglf{D^s}{k_0/s}{\psi}}}\right|_{+} = \frac{1}{\mu_0} \ddn
        {\Paren{sU^s}}} & \text{ on } \p D^s.
    \end{cases}
  \end{equation}
  Hence, we obtain $\psif{sU^s, D^s, \omega/s}(x) = \psi^s(x)$, and so forth the last identity in
  \eqref{eq:psi_Dz_Uz} by noticing that $\psif{sU^s, D^s, \omega/s}(x) = s\psif{U^s, D^s,
    \omega/s}(x)$.
\end{proof}

\begin{prop}
  \label{prop:SCT-tsr}
  For any $z\in\R^2, \theta\in[0, 2\pi), s>0$, the following relations hold:
  \begin{subequations}
    \label{eq:SCT-tsr}
    \begin{align}
      W_{mn}[D^z] &= \sum_{a,b\in\Z} {\cwv_{a}(z)} \widebar{\cwv_{b}(z)} W_{m-a,n-b}[D] ,\label{eq:SCT-trl}\\
      W_{mn}[D^\theta] &= e^{i(m-n)\theta} W_{mn}[D] ,\label{eq:SCT-rot}\\
      W_{mn}[D^s, \ve,\mu,\omega] &= W_{mn}[D, \ve,\mu,s\omega] . \label{eq:SCT-scl}
    \end{align}
  \end{subequations}
\end{prop}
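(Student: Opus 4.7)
\medskip

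\noindent\textbf{Proof plan.} Each of the three identities follows the same general template: rewrite the cylindrical wave source $\cwv_m$ as an appropriate combination of the transformed cylindrical waves that Lemma~\ref{lem:psi_Dz_Uz} can handle, transport the density $\psi_m$ back to $D$ via that lemma, then perform the corresponding change of variables in the integral defining $W_{mn}$. The only nontrivial ingredient is the addition theorem for Bessel functions, which, when rewritten in terms of the $\cwv_m$'s, reads
\begin{equation*}
\cwv_m(x) = \sum_{a\in\Z} \cwv_{m-a}(z)\,\cwv_a(x-z), \qquad x,z\in\R^2,
\end{equation*}
and which is the source of the double sum in \eqref{eq:SCT-trl}.

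For the translation formula, I would apply the addition theorem to write $\cwv_m = \sum_a \cwv_{m-a}(z)\,(\cwv_a)^z$ (using the paper's convention $f^z(y)=f(y-z)$), invoke linearity of the system \eqref{eq:psiphisys} together with Lemma~\ref{lem:psi_Dz_Uz} to get $\psi[\cwv_m,D^z](y)=\sum_a \cwv_{m-a}(z)\,\psi_a[D](y-z)$ on $\partial D^z$, substitute into \eqref{eq:sct}, expand $\overline{\cwv_n(y)}=\overline{\cwv_n(x+z)}$ by a second application of the addition theorem, and then relabel the summation indices $a\mapsto m-a$, $b\mapsto n-b$ to recover \eqref{eq:SCT-trl}. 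Absolute convergence of the double series follows from the decay estimate \eqref{eq:control_SCT} combined with the fact that $|\cwv_a(z)|$ decays super-geometrically in $|a|$.

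For the rotation formula, the key observation is that cylindrical waves are eigenfunctions of rotation: $\cwv_m(R_\theta x)=e^{im\theta}\cwv_m(x)$, so in the paper's notation $(\cwv_m)^\theta = e^{-im\theta}\cwv_m$. Linearity and Lemma~\ref{lem:psi_Dz_Uz} give $\psi[\cwv_m,D^\theta](y)=e^{im\theta}\psi_m[D](R_{-\theta}y)$; substituting into \eqref{eq:sct}, changing variables by $x=R_{-\theta}y$, and using the conjugate rotation eigenvalue $\overline{\cwv_n(R_\theta x)}=e^{-in\theta}\overline{\cwv_n(x)}$ produces the prefactor $e^{i(m-n)\theta}$.

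For the scaling identity, the point to track carefully is the frequency. A direct computation gives $(\cwv_m^{(\omega)})^s(x)=J_m(k_0|x|/s)e^{im\theta_x}=\cwv_m^{(\omega/s)}(x)$, so applying Lemma~\ref{lem:psi_Dz_Uz} with the underlying frequency chosen as $s\omega$ (and replacing $\omega$ by $s\omega$ in the statement) yields
\begin{equation*}
\psi[\cwv_m^{(\omega)}, D^s, \omega](y)=\tfrac{1}{s}\,\psi[\cwv_m^{(s\omega)}, D, s\omega](y/s), \qquad y\in\partial D^s.
\end{equation*}
Inserting this into the definition of $W_{mn}[D^s,\ve,\mu,\omega]$ and changing variables via $x=y/s$ (which contributes a Jacobian $s$ from $d\sigma(y)=s\,d\sigma(x)$, cancelling the $1/s$) and using $\cwv_n^{(\omega)}(sx)=\cwv_n^{(s\omega)}(x)$ gives exactly $W_{mn}[D,\ve,\mu,s\omega]$. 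The main place to be careful is the bookkeeping here — the naive intuition that scaling the domain is equivalent to scaling the frequency must be reconciled with the fact that the source term also changes shape, and the two effects must be shown to combine cleanly so that all the factors of $s$ cancel.
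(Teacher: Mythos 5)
Your proof is correct and follows essentially the same route as the paper: the addition theorem for cylindrical waves, superposition combined with Lemma~\ref{lem:psi_Dz_Uz}, and a change of variables in \eqref{eq:sct}, with convergence justified by \eqref{eq:control_SCT} and the Bessel decay. The only cosmetic difference is that you compute $W_{mn}[D^z]$ directly in terms of $W[D]$, whereas the paper first expresses $W_{mn}[D]$ in terms of $W[D^z]$ and then inverts by replacing $z$ with $-z$; you also write out the rotation and scaling cases, which the paper leaves to the reader.
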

\begin{proof}
  We establish only \eqref{eq:SCT-trl} here, and the identities \eqref{eq:SCT-rot} and
  \eqref{eq:SCT-scl} can be proved in a similar way using Lemma \ref{lem:psi_Dz_Uz} and change of
  variables.

  We write $\psif{\cwv_m, D}(y):=\psi_m(y)$ in order to highlight the dependence on $\cwv_m$ and
  $D$, and recall the identity \cite{han_book}:
  \begin{align}
    \label{eq:cylwav_sum}
    \cwv_{m}(x+y) = \sum_{l\in\Z} \cwv_{l+m}(x) \widebar{\cwv_l(-y)}.
  \end{align}
  Then from Lemma \ref{lem:psi_Dz_Uz} and by the principle of superposition, it follows that
  \begin{align}
    \label{eq:psi_cwv_sum}
    \psif{\cwv_m, D}(y) =  \psif{\cwv^{z}_m, D^{z}}(y^{z}) = \sum_{l\in\Z} \widebar{\cwv_l(z)}
    \psif{\cwv_{l+m}, D^{z}}(y^{z}).
  \end{align}
  By the change of variables $y\rightarrow y-z$ in \eqref{eq:sct} and by substituting \eqref{eq:psi_cwv_sum}, we get
  \begin{align*}
    W_{mn}[D] &= \int_{\p D} \widebar{\cwv_n(y)} \psif{\cwv_m, D}(y) d\sigma(y)
    = \sum_{a,b\in\Z} \widebar{\cwv_{a}(z)} {\cwv_{b}(z)} \int_{\p D^z}
    \widebar{\cwv_{b+n}(y)} \psif{\cwv_{a+m}, D^z}(y) d\sigma(y)\\
    &= \sum_{a,b\in\Z} \widebar{\cwv_{a}(z)} {\cwv_{b}(z)}
    W_{a+m,b+n}[D^z],
  \end{align*}
  which is equivalent to
  \begin{align*}
    W_{mn}[D^z] = \sum_{a,b\in\Z} \widebar{\cwv_{a}(-z)} {\cwv_{b}(-z)}
    W_{a+m,b+n}[D].
  \end{align*}
Therefore, we obtain \eqref{eq:SCT-trl} by the fact
$\cwv_a(-z)=\widebar{\cwv_{-a}(z)}$. Finally, we
  remark that the sum in \eqref{eq:SCT-tsr} converges absolutely, due to the decay
  \eqref{eq:control_SCT} of $W_{mn}$ and the decay \eqref{eq:Bessel_bound_JY} of the Bessel
  functions.
\end{proof}

Using \eqref{eq:psi_Dz_Uz} and \eqref{eq:Sgl_Kstar_Dz}, we prove easily the identity:
\begin{align*}
  \Sglf{D}{k_0}{\psif{U, D}}(x) = \Sglf{D^{z}}{k_0}{\psif{U^{z}, D^{z}}}(x^{z}) \, \text{ for } x \in
  \R^2.
\end{align*}
Consequently, by the integral representation formula
\eqref{eq:solu} this yields the following result.
\begin{cor}
  \label{prop:umU_uzmUz}
  Let $u_{\sss{U,D}}$ be the solution to~\eqref{eq:psiphisys} given the domain $D$ and the source
  term $U$. Then for any $z\in\R^2$:
  \begin{align}
    \label{eq:uDzmUz_eq_umU}
    u_{\sss{U,D}}(x) - U(x) = u_{\sss{U^z,D^z}}(x^z) - U^z(x^z), \ \forall\, x\in \R^2 \setminus \bar
    D.
  \end{align}
\end{cor}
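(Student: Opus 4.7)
The plan is to chain together two elementary pieces: the integral representation \eqref{eq:solu} for the exterior scattered field, and the translation invariance already established for the density $\psi$ and for the single-layer potential.

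First I would recall that by \eqref{eq:solu}, for $x \in \mathbb{R}^2 \setminus \bar D$ one has
\begin{equation*}
  u_{\sss{U,D}}(x) - U(x) = \mathcal{S}_D^{k_0}[\psif{U,D}](x),
\end{equation*}
so the whole statement reduces to showing the intermediate identity
\begin{equation*}
  \mathcal{S}_D^{k_0}[\psif{U,D}](x) = \mathcal{S}_{D^z}^{k_0}[\psif{U^z, D^z}](x^z) \qquad \text{for all } x\in \mathbb{R}^2,
\end{equation*}
which the text announces before the corollary. Note that once this intermediate identity is proved, applying \eqref{eq:solu} to the translated problem (on $D^z$ with source $U^z$) yields $\mathcal{S}_{D^z}^{k_0}[\psif{U^z, D^z}](x^z) = u_{\sss{U^z,D^z}}(x^z) - U^z(x^z)$, valid whenever $x^z \in \mathbb{R}^2 \setminus \bar{D^z}$, and this condition is equivalent to $x \in \mathbb{R}^2 \setminus \bar D$ since $D^z = D + z$.

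Second I would prove the intermediate identity. Set $\phi(y) := \psif{U,D}(y)$ on $\partial D$, and define $\phi^z$ on $\partial D^z$ by $\phi^z(y^z) = \phi(y)$. By Lemma \ref{lem:psi_Dz_Uz} we have $\phi^z = \psif{U^z, D^z}$ on $\partial D^z$. On the other hand, the first line of \eqref{eq:Sgl_Kstar_Dz} gives $\mathcal{S}_{D^z}^{k_0}[\phi^z](x^z) = \mathcal{S}_D^{k_0}[\phi](x)$ for every $x \in \mathbb{R}^2$, which is exactly what is needed after substituting $\phi^z = \psif{U^z, D^z}$ on the left-hand side.

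Putting the two steps together yields the corollary for $x \in \mathbb{R}^2 \setminus \bar D$. There is no real obstacle here; the only thing to be careful about is bookkeeping — making sure that the ``$z$-translate'' operator is consistently applied to both the density and its evaluation point, so that the translated density on $\partial D^z$ is truly $\psif{U^z, D^z}$ and not merely a reparametrisation of $\psif{U,D}$. The argument in fact works verbatim for rotations and (after incorporating the frequency rescaling of Lemma \ref{lem:psi_Dz_Uz}) for scalings, giving analogous transformation identities for the perturbation $u - U$.
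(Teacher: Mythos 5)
Your argument is correct and is essentially identical to the paper's: the paper also combines Lemma \ref{lem:psi_Dz_Uz} with the translation identity \eqref{eq:Sgl_Kstar_Dz} for the single layer potential to get $\Sglf{D}{k_0}{\psif{U,D}}(x) = \Sglf{D^{z}}{k_0}{\psif{U^{z},D^{z}}}(x^{z})$, and then invokes the representation formula \eqref{eq:solu}. You merely spell out the bookkeeping (the equivalence of $x\in\R^2\setminus\bar D$ with $x^z\in\R^2\setminus\overline{D^z}$) that the paper leaves implicit.
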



\section{Reconstruction of scattering coefficients and stability analysis}
\label{sec:reconstr-form-scts}
In this section we investigate the reconstruction of scattering coefficients from the measurements, and
analyze the stability of the reconstruction. We consider a multistatic configuration \cite{han_book}. Formally, let $u_s$ be the solution to \eqref{eq:uUsol}
corresponding to the source term $U_s$ for $s=1,2\ldots N_s$, and $\{x_r \}$ for $r = 1,2,...,N_r$
be the set of receivers. The multistatic response matrix (MSR) $\V:=(V_{sr})_{sr}$ is defined by
\begin{equation}
  \label{eq:Vrs}
  V_{sr} :=  u_s(x_r) - U_s(x_r) \quad {\rm for} \quad r = 1\ldots N_r, \text{ and } s = 1\ldots N_s.
\end{equation}
The entry $V_{sr}$ is the measurement recorded by the $r$-th receiver when the $s$-th source $U_s$ is in use.

\subsection{Full aperture acquisition}
\label{sec:full-angle-view}

In the following, we adopt a circular acquisition system using plane waves, which refers to the
situation that the receivers $x_r$ are uniformly distributed on a circle of radius $R$ and centered
at $z_0$ (typically, $z_0$ can be obtained using some localization algorithm and we assume that
$z_0$ is close to the center of $D$), and the sources are plane waves with equally distributed wave
direction; see Figure~\ref{fig:acqsys} (a). More specifically, for the $r$-th receiver we have
$\abs{x_r-z_0}=R$ and the angle $\theta_r:=\theta_{x_r-z_0} = 2\pi r/N_r$, where as the $s$-th plane wave source is given by
\begin{align}
  \label{eq:Us}
  U_s(x)=e^{ik_0 \xi_s\cdot x}
\end{align}
with the unit vector $\xi_s$ satisfying $\theta_s:=\theta_{\xi_s}=2\pi s/N_s$. Thanks to
\eqref{eq:uDzmUz_eq_umU}, we have
\begin{align}
  \label{eq:Vsr_umU_uzmUz}
  V_{sr} = u_{\sss{U_s,D}}(x_r) - U_s(x_r) = u_{\sss{U_s^{-z_0},D^{-z_0}}}(x_r-z_0) -
  U_s^{-z_0}(x_r-z_0),
\end{align}
which means that the measurement recorded at $x_r$ given the inclusion $D$ and the source $U_s$ is the
same as that recorded at $x_r-z_0$ given the inclusion $D^{-z_0}$ and the source $U_s^{-z_0}$.

\subsection{Linear system of scattering coefficients}
\label{sec:linear-system-sct}

We deduce from \eqref{eq:Vsr_umU_uzmUz} a linear system involving $W_{mn}$.
Recall first the Jacobi-Anger decomposition of plane waves:
\begin{equation}
  \label{eq:JA_plw}
  U_s^{-z_0} (x) = e^{ik_0 \xi_s \cdot(x+z_0)} = e^{i k_0 \xi_s \cdot z_0}
  \sum\limits_{m \in \mathbb{Z}} {e^{im(\frac{\pi}{2} -
  \theta_{s})}\cwv_{m}(x)},
\end{equation}
and by the principle of superposition, the solution $\psif{U_s^{-z_0}, D^{-z_0}}$ to
\eqref{eq:psiphisys} given the inclusion $D^{-z_0}$ and the source $U_s^{-z_0}$ reads:
\begin{align}
  \label{eq:psipsim}
  \psif{U_s^{-z_0}, D^{-z_0}}(x) = e^{i k_0 \xi_s\cdot z_0} \sum_{m \in \Z} {e^{im(\frac{\pi}{2} - \theta_{s})}
    \psi_{\cwv_m, D^{-z_0}}(x)}.
\end{align}
On the other hand, we assume $R$ large enough so that $R > \abs y$
for $y\in\p D^{-z_0}$ and  Graf's formula \eqref{eq:graf} holds
with $x$ replaced by $x_r-z_0$. Then we apply the asymptotic
expansion \eqref{eq:umU} on the last term of
\eqref{eq:Vsr_umU_uzmUz}, by substituting \eqref{eq:psipsim}, to
finally obtain
\begin{align}
  \label{eq:Vrs_ex}
  V_{sr} &= -\displaystyle{\frac{i}{4}} \sum\limits_{n \in \mathbb{Z}} H_n^{(1)}(k_0 R)e^{in\theta_r}
  \sum_{m \in \Z} e^{ik_0\xi_s\cdot z_0} {e^{im(\frac{\pi}{2} - \theta_{s})} \int_{\partial
      D^{-z_0}} \widebar{\cwv_n(y)} \psi_{\cwv_m, D^{-z_0}}(y)} d\sigma(y) \notag \\
  &= -\displaystyle{\frac{i}{4}} \sum\limits_{n \in \mathbb{Z}} H_n^{(1)}(k_0 R)e^{in\theta_r}
  \sum_{m \in \Z} e^{ik_0\xi_s\cdot z_0} e^{im(\frac{\pi}{2} - \theta_{s})}
  W_{mn}[D^{-z_0}].
\end{align}
This motivates us to introduce the constants $A_{sm}$ and $B_{rn}$:
\begin{equation}
  \label{eq:AsmBrn}
  A_{sm} = e^{ik_0\xi_s\cdot z_0} e^{im(\frac{\pi}{2}-\theta_s)} ; \quad B_{rn} =
  \displaystyle{\frac{i}{4}}\widebar{H_n^{(1)}(k_0 R)}e^{-in\theta_r}, \quad m,n \in \mathbb{Z},
\end{equation}
and rewrite \eqref{eq:Vrs_ex} as
\begin{align}
  \label{eq:Vrs_ex2}
  V_{sr} = \sum_{m,n\in\Z} A_{sm} {W}_{mn}[D^{-z_0}] {(B^H)}_{nr},
\end{align}
where $B^H$ denotes the Hermitian conjugate transpose of $B$.

\subsubsection{Connection with the far-field pattern}\label{sec:far-field-pattern}

The \emph{far-field pattern} (the scattering amplitude) when the incident field is the plane wave
$U(x)=e^{i k_0\xi\cdot x}, \abs\xi=1$ at the frequency $\omega$, is a two-dimensional
$2\pi$-periodic function $\Af_D(\cdot;\omega)$ defined to be \cite{ammari_enhancement_2011-2}
\begin{align}
  \label{eq:far_field_pattern_def}
  \Af_D({(\theta_\xi, \theta_x)}^\top; \omega) = \int_{\p D} e^{-ik_0 \abs y \cos(\theta_x -
    \theta_y)} \psif{U,D}(y) d\sigma(y)
\end{align}
with $\theta_\xi,\theta_x\in[0,2\pi]$ being respectively the angle
of $\xi$ and $x$ in polar coordinates.

In particular, for the circular acquisition system mentioned in
Section \ref{sec:full-angle-view} the following relation holds
between the MSR and the far-field pattern.
\begin{prop}
For a circular acquisition system entered at $z_0$ and the radius of the measurement circle $R\to +\infty$:
  \begin{align}
    \label{eq:Vsr_far_field}
    \abs{V_{sr}} \simeq \frac{1}{\sqrt{8 \pi k_0 R}} \Abs{\Af_{D^{-z_0}}({(\theta_s, \theta_r)}^\top; \omega)}.
  \end{align}
  Moreover, there exists a constant $C=C(k_0, k, D^{-z_0})$ such that
  \begin{align}
    \label{eq:Vsr_bound}
    \abs{V_{sr}} \leq C\Paren{\frac{\abs{\p D}}{\sqrt R}} \ \text{ when } R \rightarrow +\infty.
  \end{align}
\end{prop}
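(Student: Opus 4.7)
The plan is to plug the exact asymptotic expansion \eqref{eq:umU} into $V_{sr}$, apply the large-argument asymptotic of the Hankel function to extract the $R^{-1/2}$ behaviour, and then recognize the remaining sum as a Jacobi--Anger expansion of the kernel defining the far-field pattern. Concretely, from \eqref{eq:Vsr_umU_uzmUz} together with \eqref{eq:umU}, applied with source $U_s^{-z_0}$ on the inclusion $D^{-z_0}$ and evaluation point $x_r-z_0$ (which has modulus $R$ and angle $\theta_r$), I would write
\begin{equation*}
V_{sr} \;=\; -\frac{i}{4}\sum_{n\in\Z} H_n^{(1)}(k_0 R)\,e^{in\theta_r}\int_{\p D^{-z_0}} J_n(k_0|y|)\,e^{-in\theta_y}\,\psif{U_s^{-z_0},D^{-z_0}}(y)\,d\sigma(y).
\end{equation*}

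Next I would substitute the classical large-argument expansion
$H_n^{(1)}(t)=\sqrt{2/(\pi t)}\,e^{i(t-n\pi/2-\pi/4)}(1+O(1/t))$ and factor out the $n$-independent prefactor $e^{i(k_0R-\pi/4)}\sqrt{2/(\pi k_0 R)}$. The residual exponential in $n$ is $e^{-in\pi/2}e^{in\theta_r}=(-i)^n e^{in\theta_r}$, so after bringing the sum inside the integral and using the Jacobi--Anger identity
\begin{equation*}
\sum_{n\in\Z}(-i)^n J_n(k_0|y|)\,e^{in(\theta_r-\theta_y)}\;=\;e^{-ik_0|y|\cos(\theta_r-\theta_y)},
\end{equation*}
the remaining integral is precisely $\Af_{D^{-z_0}}((\theta_s,\theta_r)^\top;\omega)$ by the definition \eqref{eq:far_field_pattern_def} (recall that the source term $U_s^{-z_0}$ carries the dependence on $\theta_s$). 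Taking moduli and combining the constants $|{-i/4}|\sqrt{2/(\pi k_0 R)}=1/\sqrt{8\pi k_0 R}$ gives \eqref{eq:Vsr_far_field}.

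For the bound \eqref{eq:Vsr_bound} I would apply the asymptotic just obtained and estimate $\Af_{D^{-z_0}}$ directly from \eqref{eq:far_field_pattern_def}: since $|e^{-ik_0|y|\cos(\cdot)}|=1$, the Cauchy--Schwarz inequality yields $|\Af_{D^{-z_0}}|\leq |\p D|^{1/2}\,\|\psif{U_s^{-z_0},D^{-z_0}}\|_{L^2(\p D^{-z_0})}$. Well-posedness of the system \eqref{eq:psiphisys} together with the boundedness of the unit-amplitude plane-wave data $U_s^{-z_0}$ in $H^1(\p D)$ (with constants depending only on $k_0$, $k$, and $D^{-z_0}$) gives $\|\psi\|_{L^2(\p D^{-z_0})}\leq C|\p D|^{1/2}$, hence $|\Af_{D^{-z_0}}|\leq C|\p D|$, from which \eqref{eq:Vsr_bound} follows.

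The main technical point to watch is the justification of the termwise use of the Hankel asymptotic inside the infinite sum over $n$: the $O(1/R)$ remainder in $H_n^{(1)}(k_0 R)$ is not uniform in $n$. However, the coefficients
$\int_{\p D^{-z_0}} J_n(k_0|y|)\,e^{-in\theta_y}\,\psi\,d\sigma$
decay super-exponentially in $|n|$ thanks to the factorial decay of $J_n$ at fixed argument (compare \eqref{eq:control_SCT}), which dominates any polynomial-in-$n$ growth of the Hankel asymptotic remainder. A straightforward dominated-convergence argument then allows the exchange of the large-$R$ limit and the sum, completing the proof.
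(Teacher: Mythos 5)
Your argument is correct, and for the asymptotic formula \eqref{eq:Vsr_far_field} it takes a genuinely different route from the paper. The paper never touches the Fourier--Bessel series: it writes $V_{sr}$ directly as the single layer potential $\mathcal{S}^{k_0}_{D^{-z_0}}[\psi_{U_s^{-z_0},D^{-z_0}}]$ evaluated at $x_r-z_0$, substitutes the large-argument asymptotic of $H_0^{(1)}$ into the kernel $\Gamma_{k_0}(x-y)$ using $|x-y|\approx R-|y|\cos(\theta_x-\theta_y)$, and reads off the far-field integral in one step. You instead expand via Graf's formula, apply the Hankel asymptotic termwise, and resum with Jacobi--Anger; this lands on the same expression but forces you to justify the interchange of the limit $R\to\infty$ with the sum over $n$, a difficulty the paper's kernel-level argument avoids entirely. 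Your stated justification is slightly off target: the obstruction is not a ``polynomial-in-$n$ growth of the Hankel asymptotic remainder'' but the fact that for $|n|\gtrsim k_0R$ the large-argument asymptotic fails altogether and $|H_n^{(1)}(k_0R)|$ grows like $(C_R|n|)^{|n|}$ (cf.\ \eqref{eq:Hankel_bound}). What actually saves the exchange is that the $n$-th coefficient decays like $J_n(k_0\sup|y|)$, so $|H_n^{(1)}(k_0R)\,J_n(k_0|y|)|\lesssim(\sup|y|/R)^{|n|}$ and the tail is uniformly negligible once $R$ exceeds the diameter of $D^{-z_0}$ --- the same condition $R>|y|$ already needed for Graf's formula. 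Since you flag the issue and the repair is routine, this is a cosmetic rather than a substantive defect. For the bound \eqref{eq:Vsr_bound} your argument (Cauchy--Schwarz on the far-field integral plus the well-posedness estimate \eqref{eq:phipsi_energy_control} applied to the unit-amplitude plane wave) coincides with the paper's.
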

\begin{proof}
  Recall the following asymptotic of the Hankel function
  \begin{align}
    \label{eq:H0_asymp_t_inf}
    H_0^{(1)}(t) = \sqrt{\frac 2{\pi t}} e^{i\Paren{t -\frac \pi 4}} + O(1/t) \ \text{ as }
    t\rightarrow \infty.
  \end{align}
  Substituting in the integral representation of \eqref{eq:Vsr_umU_uzmUz}, we find
  \begin{align}
    \label{eq:Vsr_far_field_intexpr}
    V_{sr} &= \Sglf{D^{-z_0}}{k_0}{\psif{U_s^{-z_0}, D^{-z_0}}}(x_r) \notag \\
    &\simeq - \frac i 4 e^{-\frac{\pi i} 4}e^{i k_0 R} \sqrt{\frac{2}{\pi k_0 R}} \int_{\p
      D^{-z_0}} e^{-ik_0 \abs y \cos(\theta_x-\theta_y)} \psif{U_s^{-z_0}, D^{-z_0}}(y) d\sigma(y),
  \end{align}
  which gives \eqref{eq:Vsr_far_field} by definition of the far-field pattern after taking the
  modulus.

  It is proved in \cite{ammari_boundary_2004} that there exists a constant $C=C(k, k_0, D)$
  such that the solution $(\phi, \psi)$ of \eqref{eq:psiphisys} satisfies:
  \begin{align}
    \label{eq:phipsi_energy_control}
    \norm{\varphi}_{L^2(\p D)} + \norm{\psi}_{L^2(\p D)} \leq C( \norm{U}_{L^2(\p D)} + \norm{\Grad
      U}_{L^2(\p D)}).
  \end{align}
  For the plane wave source $U_s$, this implies:
  \begin{align*}
    \norm{\psif{U_s, D}}_{L^2(\p D)} \leq C (1+k_0) \abs{\p D}^{1/2}.
  \end{align*}
  Substituting back in \eqref{eq:Vsr_far_field_intexpr} and applying Cauchy-Schwartz inequality, we
  obtain \eqref{eq:Vsr_bound}.
\end{proof}

\subsubsection{Truncated linear system}
\label{sec:trunc-line-syst}
A truncated version of \eqref{eq:Vrs_ex2} using the first $\abs m, \abs n \leq K$ terms reads
\begin{equation}
  \label{eq:V_AWB_Ers}
  V_{sr} = \sum_{m=-K}^{K}\sum_{n=-K}^{K} {A_{sm} {W}_{mn}[D^{-z_0}] {(B^H)}_{nr}} +
  E_{sr},
\end{equation}
where $E_{sr}$ denotes the truncation error. For the fixed
expansion-order $K$, we introduce the matrix $\W =
({W}_{mn}[D^{-z_0}])_{mn}$ of dimension $(2K+1)\times(2K+1)$, the
source matrix $\A=(A_{sm})_{sm}$ of dimension $N_s\times (2K+1)$
and the receiver matrix $\B=(B_{rn})_{rn}$ of dimension $N_r\times
(2K+1)$, as well as the truncation error matrix $\E=(E_{sr})_{sr}$
of dimension $N_s\times N_r$. Then \eqref{eq:V_AWB_Ers} can be put
into a matrix form
\begin{align}
  \label{eq:AWBE_mat}
  \V = \A \W \B^H + \E  = \bL(\W) + \E,
\end{align}
where the linear operator $\bL: \mathbb{R}^{(2K+1)\times(2K+1)}
\rightarrow \mathbb{R}^{N_s \times
  N_r}$ is given by
\begin{equation}
  \label{eq:opL}
  \bL(\mbf X) = \A \mbf X \B^H.
\end{equation}
In practical situations,  the data $\V$ may be contaminated by
some noise $\Wnoise$,  thereby modifing \eqref{eq:AWBE_mat} to 
\begin{equation}
  \label{eq:AWBE_mat_noise}
  \V = \bL(\W) + \E + \Wnoise .
\end{equation}
We suppose that $\Wnoise = \snoise \bN_0$, where $\snoise>0$ and $\bN_0$ is a $N_s$-by-$N_r$ complex
random matrix with independent and identically distributed $\normallaw 1$ entries.

From the bound \eqref{eq:Vsr_bound}, one can see the size of the measurement $V_{sr}$ is of order
${\abs{\p D}}/{\sqrt R}$, hence we define signal-to-noise ratio (SNR) to be
\begin{align}
  \label{eq:def_SNR}
  \SNR = \frac{\abs{\p D}/{\sqrt R}}{\snoise}.
\end{align}

\subsection{Bound of the truncation error $E_{sr}$}
\label{sec:trunc_err_E}
In this section we estimate the truncation error $E_{sr}$ which will be used later to determine the
maximal resolving order $K$ that one can achieve in a noisy environment. The following results will
be useful:
\begin{itemize}
\item For a fixed $t$ when $n$ is large, we have the following asymptotic behaviors of the Bessel functions \cite{abramowitz_handbook_1964}:
  \begin{align}
    \label{eq:Bessel_bound_JY}
    J_{n}(t) \simeq \displaystyle{\sqrt{\frac{1}{2\pi|n|}}\left(\frac{et}{2|n|} \right)^{|n|}}, \ \
    Y_{n}(t) \simeq \displaystyle{-\sqrt{\frac{2}{\pi|n|}}\left(\frac{et}{2|n|}
    \right)^{-|n|}},
  \end{align}
  which implies an upper bound of the Hankel function:
  \begin{align}
    \label{eq:Hankel_bound}
    \Abs{H_{n}^{(1)}(k_0R)}
    \lesssim {\Paren{C_R\abs n}^{\abs n} + \Paren{C_R\abs n}^{-\abs n}
    },
  \end{align}
  where $C_R:= 2/(ek_0 R)$.
\item For $c>0$ and $k\in\N$ such that $k> c/e$, the following bound holds:
  \begin{align}
    \label{eq:xpx_sum}
    \sum_{m>k}\Paren{\frac{c}{m}}^m \leq \Paren{\frac c k}^{k}
    \Paren{\frac{1}{1+\ln(k/c)}}.
  \end{align}
  The proof is given in Appendix \ref{sec:proof-of-xpx-sum}.
\end{itemize}

\begin{prop}
  \label{prop:Esr_bound_K}
  Let $C_R$ be the constant defined in \eqref{eq:Hankel_bound}, and $\CW>1$ be the constant in
  \eqref{eq:control_SCT} with the shape $D^{-z_0}$. Suppose the radius of measurement circle $R$
  satisfies $\CW^2C_R<1$.  Then there exists a sufficiently large truncation order $K$ satisfying
  $K>\CW/(C_R e)$, such that the truncation error $E_{sr}$ at the order $K$ decays as
  \begin{align}
    \label{eq:Esr_bound_K}
    \abs{E_{sr}} = O(\rho^{-K})
  \end{align}
  with $\rho=\Paren{\CW^2 C_R}^{-1}>1$ a constant depending only on $D^{-z_0},\ve, \mu, \omega,$ and $R$.
\end{prop}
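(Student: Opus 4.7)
The plan is to write $E_{sr}$ explicitly as the tail of the exact expansion \eqref{eq:Vrs_ex2}:
\begin{align*}
E_{sr} = \sum_{(m,n)\in I_K^c} A_{sm}\, W_{mn}[D^{-z_0}]\, (B^H)_{nr},
\end{align*}
where $I_K = \{(m,n) \in \mathbb{Z}^2 : |m|, |n| \leq K\}$. Applying the triangle inequality together with $|A_{sm}| = 1$, the Hankel bound \eqref{eq:Hankel_bound}, and the SCT decay \eqref{eq:control_SCT} reduces the problem to estimating
\begin{align*}
\sum_{(m,n)\in I_K^c} \frac{C_W^{|m|+|n|}}{|m|^{|m|}\,|n|^{|n|}}\bigl[(C_R|n|)^{|n|} + (C_R|n|)^{-|n|}\bigr].
\end{align*}

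The key algebraic cancellation is
\begin{align*}
\frac{C_W^{|n|}}{|n|^{|n|}}\,(C_R|n|)^{|n|} = (C_W C_R)^{|n|},
\end{align*}
which collapses the super-exponentially growing Hankel factor against the super-exponentially decaying SCT factor, leaving a geometric ratio. Since $C_W>1$, the hypothesis $C_W^2 C_R < 1$ gives $C_W C_R < 1$, so the resulting geometric series converges. I would split $I_K^c$ into three regions: (A) $|m|>K,\ |n|\leq K$; (B) $|m|\leq K,\ |n|>K$; and (C) $|m|,|n|>K$. In region (B), the $n$-tail $\sum_{|n|>K}(C_W C_R)^{|n|}$ is $O((C_W C_R)^K)$, and the outer sum $\sum_{|m|\leq K} C_W^{|m|}/|m|^{|m|}$ is bounded by a $K$-independent constant because the infinite series converges. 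In region (A), the inner $n$-sum is bounded uniformly in $K$, and the outer tail $\sum_{|m|>K}(C_W/|m|)^{|m|}$ is controlled by \eqref{eq:xpx_sum} as $O((C_W/K)^K)$ once $K>C_W/e$. Region (C) is dominated by (A) and (B). The complementary Hankel piece $(C_R|n|)^{-|n|}$ produces a doubly super-exponentially decaying tail of the form $\sum (C_W/C_R)^{|n|}/|n|^{2|n|}$ and is easily absorbed.

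Combining yields $|E_{sr}| = O((C_W C_R)^K) + O((C_W/K)^K)$; the second term decays faster than any geometric rate, so the first dominates. Since $C_W > 1$ one has $(C_W C_R)^K \leq (C_W^2 C_R)^K = \rho^{-K}$, which gives the claim. The main obstacle is the careful bookkeeping needed to choose $K$ large enough so that: (i) the bound \eqref{eq:xpx_sum} applies to the $m$-tail; (ii) in the $n$-tail $|n|>K$ the growing branch $(C_R|n|)^{|n|}$ is actually the dominant one in the Hankel bound; and (iii) any polynomial-in-$K$ prefactors (such as $1/(1+\ln(K/C_W))$ and the bounds on the $(C_R|n|)^{-|n|}$ contribution) can be absorbed into the $O(\cdot)$ constant. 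The combined requirements (i) and (ii), along with the room needed to pass from the sharper $(C_W C_R)^K$ to the stated $(C_W^2 C_R)^K$, produce the technical threshold $K > C_W/(C_R e)$.
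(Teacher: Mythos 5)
Your proof is correct and follows essentially the same route as the paper's: the same three-region split of the tail, the same cancellation $(C_W/|n|)^{|n|}(C_R|n|)^{|n|}=(C_WC_R)^{|n|}$, the bound \eqref{eq:xpx_sum} for the $m$-tail, and a geometric series for the $n$-tail. Your bookkeeping in region (B) is in fact slightly sharper (you bound $\sum_{|m|\le K}C_W^{|m|}/|m|^{|m|}$ by a constant where the paper uses $C_W^K$, which is precisely where the extra factor $C_W^K$ in $\rho^{-K}=(C_W^2C_R)^K$ comes from), but this only strengthens the stated conclusion.
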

\begin{proof}
  We separate $E_{sr}$ in three terms as follows:
  \begin{align*}
    {E_{sr}} = \Paren{\sum_{\substack{|m| \le K\\|n| > K}} + \sum_{\substack{|n| \le K\\|m| >
          K}}+\sum_{\substack{|m| > K\\|n| > K}}} {A_{sm} W_{mn} (B^H)_{rn}} = I_1 + I_2 +
          I_3.
  \end{align*}
  Let the radius $R$ be sufficiently large so that $\CW^2C_R <1$. Then by \eqref{eq:control_SCT} and
  \eqref{eq:Hankel_bound}, we can bound $\abs{I_1}$ by
  \begin{align*}
    \abs{I_1} &\leq \sum_{|m|\leq K} \frac{\CW^{\abs m}}{\abs{m}^{\abs m}} \sum_{|n| > K}
    \Abs{H^{(1)}_n(k_0 R)}  \frac{\CW^{\abs n}}{\abs{n}^{\abs n}}\\
    &\lesssim \Paren{\sum_{|m|\leq K} \frac{\CW^{\abs m}}{\abs{m}^{\abs m}}} \Paren{\sum_{\abs n > K}
      {\Paren{\CW C_R}^{\abs n}} + \sum_{\abs n > K} \Paren{\frac{\CW/C_R} {\abs{n}^2}}^{\abs n}}.
  \end{align*}
  Up to some factors independent of $K$, we can bound the first sum in the last expression by
  $\CW^K$, and the second sum by $(\CW C_R)^K$ (since $\CW C_R<\CW^2 C_R<1$). Furthermore, as a consequence of
  \eqref{eq:xpx_sum} the last sum can be neglected by choosing
  \begin{align*}
    K > \CW / (C_R e)
  \end{align*}
  sufficiently large. Therefore, we obtain $\abs{I_1} \lesssim (\CW^2C_R)^{K}$. Proceeding in a similar
  way one can establish for $I_2$ and $I_3$:
  \begin{align*}
    \abs{I_2}\lesssim (\CW^2C_R)^{K}, \ \text{ and }  \abs{I_3}\lesssim (\CW^2C_R)^{K} K^{-K}.
  \end{align*}
  Putting these bounds together gives \eqref{eq:Esr_bound_K}.
\end{proof}

\subsection{Stability of the linear operator L}
\label{sec:stab-line-oper}

Here we show that the operator $\bL$ is ill-conditioned for the
circular acquisition system mentioned in section
\ref{sec:linear-system-sct}. If $N_s, N_r \ge 2K+1$, then it can
be verified easily that the matrices $\A,\B$ defined in
\eqref{eq:AsmBrn} are orthogonal:
\begin{align}
  \label{eq:As_Ar_ortho}
  \A^H \A = N_s \I, \ \text{ and } \B^H \B = N_r \bD,
\end{align}
where $\I$ is the identity matrix of dimension $\Paren{2K+1} \times \Paren{2K+1}$ and the $\Paren{2K+1} \times \Paren{2K+1}$ diagonal matrix 
\begin{align}
  \label{eq:P_matrix}
  \bD =
  \begin{pmatrix}
    |d_{-K}|^2 &  0  & \ldots & 0\\
    0  &  |d_{-K+1}|^2 & \ldots & 0\\
    \vdots & \vdots & \ddots & \vdots\\
    0  &   0   &\ldots & |d_K|^2
  \end{pmatrix}\
  \ \text{ with }  d_n = \displaystyle{\frac{i}{4}}{H_n^{(1)}(k_0 R)}.
\end{align}

\begin{prop}
  \label{prop:SVD_L}
  Suppose $N_s, N_r \ge 2K+1$. The right singular vector of $\bL$ is the canonical basis of
  $\R^{(2K+1) \times (2K+1)}$, and the $(m,n)$-th singular value of operator $\bL$ is
  \begin{equation}
    \label{eq:sing_vl}
    \lambda_{mn} = \sqrt{N_sN_r}|d_n|, \ \forall\, m,n=-K,\ldots, K
    .
  \end{equation}
\end{prop}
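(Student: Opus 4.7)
The plan is to compute the singular value decomposition via the symmetric operator $\bL^* \bL$, exploiting the orthogonality relations \eqref{eq:As_Ar_ortho}. I would first equip both source and target spaces with the Frobenius inner product $\langle \mathbf{M}, \mathbf{N}\rangle_F = \mathrm{tr}(\mathbf{M}^H \mathbf{N})$. Under this inner product, the adjoint of $\bL$ is immediate from the cyclic property of the trace: for any $\mathbf{X} \in \mathbb{C}^{(2K+1)\times(2K+1)}$ and $\mathbf{Y} \in \mathbb{C}^{N_s \times N_r}$,
\begin{equation*}
\langle \bL(\mathbf{X}), \mathbf{Y}\rangle_F = \mathrm{tr}(\B \mathbf{X}^H \A^H \mathbf{Y}) = \langle \mathbf{X}, \A^H \mathbf{Y} \B\rangle_F,
\end{equation*}
so $\bL^*(\mathbf{Y}) = \A^H \mathbf{Y} \B$.

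Next I would substitute the orthogonality relations $\A^H \A = N_s \I$ and $\B^H \B = N_r \bD$ to obtain
\begin{equation*}
\bL^* \bL(\mathbf{X}) = \A^H (\A \mathbf{X} \B^H) \B = N_s N_r \, \mathbf{X} \bD.
\end{equation*}
Thus $\bL^*\bL$ acts on $\mathbf{X}$ by right multiplication with the diagonal matrix $N_s N_r \bD$. Since $\bD$ is diagonal, the canonical basis matrices $\mathbf{E}_{mn}$ of $\mathbb{C}^{(2K+1)\times(2K+1)}$ (with a single $1$ at entry $(m,n)$ and zeros elsewhere) form an orthonormal eigenbasis satisfying
\begin{equation*}
\bL^* \bL(\mathbf{E}_{mn}) = N_s N_r |d_n|^2 \, \mathbf{E}_{mn}.
\end{equation*}

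Finally, taking square roots of the eigenvalues of $\bL^*\bL$ gives the singular values $\lambda_{mn} = \sqrt{N_s N_r}\,|d_n|$, and the $\mathbf{E}_{mn}$ are by construction the right singular vectors. There is no real obstacle here: the only subtlety is verifying that the adjoint is taken with respect to the Frobenius structure on both spaces so that the hypothesis $N_s, N_r \ge 2K+1$ (needed for $\A$ and $\B$ to have full column rank and for \eqref{eq:As_Ar_ortho} to hold) yields a genuine orthonormal eigenbasis rather than just a formal diagonalization.
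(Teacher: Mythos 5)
Your proof is correct and takes essentially the same route as the paper: the paper's proof computes the Gram matrix $\langle \bL(\bv_{mn}), \bL(\bv_{m'n'})\rangle = \delta_{mm'}\delta_{nn'}N_sN_r|d_n|^2$ directly from the orthogonality relations \eqref{eq:As_Ar_ortho}, which is precisely your diagonalization of $\bL^*\bL$ on the canonical basis, just written without naming the adjoint explicitly. There is no substantive difference in the argument.
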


\begin{proof}
  \label{prf:sing_vl}
  For $m,n=-K\ldots K$, let $\bv_{mn}$ be the $\Paren{2K+1} \times \Paren{2K+1}$ square matrix
  \begin{equation}
    (\bv_{mn})_{ij} = \delta_{mi}\delta_{nj}, \ \forall\, i,j=-K,\ldots, K.
  \end{equation}
  We define the inner product of two complex matrices as $\seq{A, B}:=\sum_{m,n} (A^H)_{mn}
  B_{mn}$. A simple computation using \eqref{eq:As_Ar_ortho} shows
  \begin{align*}
    \seq{\bL(\bv_{mn}),\bL(\bv_{m'n'})} = N_s N_r \seq{\bv_{mn}, \bv_{m'n'}\bD} =
    \delta_{mm'}\delta_{nn'} N_s\,N_r\,\abs{d_n}^2.
  \end{align*}
  This proves that the canonical basis $\set{\bv_{mn}}_{m,n=-K\ldots K}$ constitutes the right 
  singular vectors of $\bL$. The $(m,n)$-th singular value of $\bL$ is hence given by
  \begin{equation}
    \lambda_{mn} = ||\bL(\bv_{mn})||_F = \sqrt{N_sN_r}|d_n|, \quad m,n= -K, \ldots, K, 
  \end{equation}
  where $\norm{\cdot}_F$ denotes the Frobenius norm of matrices.  The left singular vectors are given by   
  $\bu_{mn}:= \bL(\bv_{mn})/\lambda_{mn}$.
  This completes the proof.
\end{proof}

\subsubsection{Condition number of $\bL$}
\label{sec:condition-number-bL} Remark that as $K\rightarrow
+\infty$, $\abs{d_K}$ diverges, and hence $\bL$ is an unbounded
operator and the following bound holds for the condition number of
$\bL$:
\begin{cor}
  \label{cor:cond_L}
  Under the same assumptions of Proposition \ref{prop:SVD_L}, we have
  \begin{align}
    \label{eq:condnum_L_bound}
    \mathrm{\cond}\,(\bL) \lesssim (C_R K)^{K}  \ \text{ as } K \rightarrow +\infty
  \end{align}
  with the constant $C_R$ defined in \eqref{eq:Hankel_bound}.
\end{cor}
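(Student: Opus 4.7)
The plan is to read off the condition number directly from the explicit singular value formula $\lambda_{mn}=\sqrt{N_s N_r}\,|d_n|$ proved in Proposition \ref{prop:SVD_L}, and then use the Hankel asymptotics to show that the spread of $\{|d_n|:-K\le n\le K\}$ is controlled entirely by the largest-order term.

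First I would observe that $\lambda_{mn}$ depends only on the second index $n$, so
\[
\cond(\bL)=\frac{\max_{|n|\le K}|d_n|}{\min_{|n|\le K}|d_n|}.
\]
For the numerator, I would invoke the upper bound \eqref{eq:Hankel_bound} on $|H_n^{(1)}(k_0 R)|$: the first term $(C_R|n|)^{|n|}$ is increasing in $|n|$ for $|n|\ge 1/(C_R e)$, while the second term $(C_R|n|)^{-|n|}$ is decreasing there, so for $K$ large enough the maximum is attained at $|n|=K$ and is of order $(C_R K)^{K}$. Hence $\max_{|n|\le K}|d_n|\lesssim (C_R K)^{K}$.

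For the denominator, I would argue that $\min_{|n|\le K}|d_n|$ is bounded below by a positive constant independent of $K$. Indeed, $|d_n|=\tfrac14|H_n^{(1)}(k_0 R)|$ is continuous in $n$ and strictly positive (the Hankel function has no common zeros of its real and imaginary parts at a fixed positive argument $k_0 R$), and for any sufficiently large $|n|$ the bound in \eqref{eq:Hankel_bound} combined with the matching asymptotic $Y_n(k_0R)\sim -\sqrt{2/(\pi|n|)}(C_R|n|)^{|n|}$ from \eqref{eq:Bessel_bound_JY} shows that $|d_n|$ is actually growing. Therefore the minimum over $\{|n|\le K\}$ is achieved at some bounded index and is bounded below by a constant depending only on $k_0 R$.

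Putting these two estimates together gives
\[
\cond(\bL)\;\lesssim\;\frac{(C_R K)^{K}}{\mathrm{const}}\;=\;O\bigl((C_R K)^{K}\bigr)\quad\text{as }K\to\infty,
\]
which is \eqref{eq:condnum_L_bound}. The only step where a little care is needed is the uniform lower bound on $\min_{|n|\le K}|d_n|$; if one wishes to avoid the continuity/positivity argument, the simplest alternative is just to bound below by $|d_0|=\tfrac14|H_0^{(1)}(k_0 R)|>0$, which is independent of $K$ and suffices for the claimed asymptotic. I expect no essential obstacle beyond this bookkeeping, since the heavy lifting (the singular value decomposition of $\bL$) is already done in Proposition \ref{prop:SVD_L}.
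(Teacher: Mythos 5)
Your proposal is correct and follows essentially the same route as the paper's proof: read the condition number off Proposition \ref{prop:SVD_L} as $\max_{|n|\le K}|d_n|/\min_{|n|\le K}|d_n|$, use the Hankel asymptotics from \eqref{eq:Bessel_bound_JY} to see that the numerator is of order $(C_RK)^K$ (attained at $|n|=K$), and note that the minimum is attained at a bounded index and is therefore bounded below by a positive constant independent of $K$. The only blemish is your closing ``simplest alternative'': since $|d_0|$ is itself one of the values being minimized, it is an \emph{upper} bound for $\min_{|n|\le K}|d_n|$, not a lower bound, so that shortcut fails --- but your main continuity-and-growth argument does not rely on it.
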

\begin{proof}
  We denote respectively by $\dmax$ and $\dmin$ the maximum and the minimum value of $\abs{d_n}$ for
  $n=-K\ldots K$.  Using the asymptotic expansion in \eqref{eq:Bessel_bound_JY}, we obtain
  \begin{align}
    \label{eq:dn_asymptotic}
    \abs{d_n} \propto \Abs{H^{(1)}_n(k_0 R)} \simeq \sqrt{\frac {2}{\pi \abs n}} \Paren{C_R \abs
      n}^{\abs n} \ \text{ as } \abs n\rightarrow +\infty.
  \end{align}
  Hence, when $K$ is large, $\dmax\simeq d_K$ and $\dmin$ is bounded from below. Therefore, the condition
  number is bounded by
  \begin{align*}
    \cond(\bL) = \frac\dmax \dmin \lesssim (C_R K)^{K},
  \end{align*}
  where the underlying constant depends only on $\dmin$.
\end{proof}

\subsection{Least-squares reconstruction}\label{sec:least-square-reconst}
We can reconstruct $\W$ by solving the least-squares problem
\begin{align}
  \label{eq:least_square}
  \West = \argmin_{\W\in\ker\bL^\bot} \Norm{\bL(\W)-\V}_F,
\end{align}
where $\ker\bL$ denotes the kernel of $\bL$. 
The solution of (\ref{eq:least_square}) is given by $\West=\bL^\dagger(\V)$ with $\bL^\dagger$ being the pseudo-inverse of
$\bL$. Since the order $K$ satisfies $2K+1 <N_s, N_r$, the matrices $\A$ and $\B$ have full columns rank and both of them have left pseudo-inverses denoted by $\A^\dagger$ and $\B^\dagger$, respectively. Then we  have the pseudo-inverse formulas 
\begin{align}
  \label{addeq}
\A^\dagger = (\A^H \A)^{-1} \A^H,\quad \B^\dagger = (\B^H \B)^{-1} \B^H.
\end{align}
Using the orthogonal property \eqref{eq:As_Ar_ortho} of $\A$ and $\B$ together with (\ref{addeq}), we verify that
\begin{align}
  \label{eq:L_pseudoinv}
  \bL^\dagger(\V) = \frac{1}{N_s N_r} \A^H \V \B \D^{-1},
\end{align}
which provides an analytical reconstruction formula for $\W$.

\subsection{Maximal resolving order}
\label{sec:maxim-resolv-order}
Here we give an estimation of the maximal resolving order $K$ that can be reconstructed by
\eqref{eq:least_square} as a function of the SNR.

Let $\rho$ be the constant in Proposition \ref{prop:Esr_bound_K}. We work under the assumption
\begin{equation}
  \label{eq:meas_noise}
  \rho^{-K} \ll \snoise \ll {\abs{\p D}}/{\sqrt R},
\end{equation}
which is the regime that the measurement noise is much larger than the truncation error but much
smaller than the signal (or $\SNR \gg 1$).

Since $\bL$ is injective, $\bL^\dagger \bL = \I$ and from \eqref{eq:AWBE_mat_noise} we have
\begin{align*}
  \Exp{\Abs{\Paren{\West - \W}_{mn}}^2}^{1/2} = \Exp{\abs{{\bL^\dagger(\E+\Wnoise)}_{mn}}^2}^{1/2}
  \leq \abs{\bL^\dagger(\E)_{mn}} + \snoise\lambda_{mn}^{-1}\sqrt{N_sN_r}.
\end{align*}
On one hand, by Cauchy-Schwartz inequality and the bound \eqref{eq:Esr_bound_K}:
\begin{align*}
  \abs{\bL^\dagger(\E)_{mn}} \leq \lambda_{mn}^{-1} \norm{\E}_F \lesssim \lambda_{mn}^{-1}
  \sqrt{N_sN_r} \rho^{-K},
\end{align*}
which can be neglected thanks to the left inequality of
\eqref{eq:meas_noise}. Therefore, we obtain
\begin{align}
  \label{eq:recon_error_bound_1}
  \Exp{\Abs{\Paren{\West - \W}_{mn}}^2}^{1/2} \lesssim \snoise\lambda_{mn}^{-1}\sqrt{N_sN_r} =
  \snoise \abs{d_n}^{-1},
\end{align}
which decreases to zero for all $m,n\geq K$ as $K\rightarrow
+\infty$, due to \eqref{eq:dn_asymptotic}. Apparently, this seems
to imply that the maximal resolving order is infinity. However,
from the decay behavior \eqref{eq:control_SCT} of $W_{mn}$,  it is
reasonable to require the reconstruction error to be smaller than
the signal level:
\begin{align*}
  \Exp{\Abs{\Paren{\West - \W}_{mn}}^2}^{1/2} \leq \tau_0 \frac{\CW^{\abs m + \abs n}}{\abs{m}^{\abs m}
    \abs{n}^{\abs n}}
\end{align*}
with $\tau_0>0$ being a tolerance number. Using
\eqref{eq:recon_error_bound_1} at the order $m=n=K$, this
implies 
\begin{align*}
  \snoise \frac{1}{\abs{d_K}} \lesssim \tau_0 \Paren{\frac{\CW}{K}}^{2K},
\end{align*}
which yields $\snoise K^{K+1/2} \lesssim \tau_0 \rho^{-K}$ by \eqref{eq:dn_asymptotic}.
Finally, according to the regime \eqref{eq:meas_noise} and by definition of the SNR, we find
\begin{align}
  \label{eq:max_resolving_order_bound}
  K^{K+1/2} \lesssim \tau_0 \SNR,
\end{align}
and the maximal resolving order is defined as the largest $K$ satisfying
\eqref{eq:max_resolving_order_bound}.

\section{Shape descriptor and identification in a dictionary}
\label{sec:Shape_Descriptor}

In this section we construct the shape descriptors based on the
scattering coefficients which are invariant to rigid transform.
For a collection of standard shapes, we build a
frequency-dependent dictionary of shape descriptors and develop a
shape identification algorithm in order to identify a shape from
the dictionary up to some translation, rotation and scaling. In
the following, we denote by $B$ a reference shape of size $1$ 
centered at the origin, so that the unknown inclusion $D$ is
obtained from $B$ by a rotation with angle $\theta$, a scaling
$s>0$ and a translation $z\in \R^2$ as $D = z+ s R_\theta B$.

\subsection{Translation and rotation invariant shape descriptor}
\label{sec:shape-descriptor}

The construction of the shape descriptor starts from the formula \eqref{eq:SCT-trl}, where the
convolution relationship suggests us to pass to the Fourier domain. It turns out that the Fourier
series of $W_{mn}[D]$ is just the far-field pattern $\Af_D$ \cite{ammari_enhancement_2011-2}. The following proposition holds.
\begin{prop}
  \label{prop:farfieldpat_SCT}
  Let $\xi= {(\xi_1,\xi_2)}^\top \in [0, 2\pi]^2$. Then, we have
  \begin{align}
    \label{eq:Fourier_SCT}
    \Af_D(\xi; \omega) = \sum_{m,n\in\Z} W_{mn}[D, \ve, \mu, \omega] e^{i m(\frac\pi 2
      -\xi_1)} e^{-i n(\frac\pi 2 - \xi_2)}.
  \end{align}
\end{prop}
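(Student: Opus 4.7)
The plan is to expand the two plane waves that appear in the definition of $\Af_D$ via the Jacobi--Anger formula into cylindrical harmonics, then use linearity of the problem \eqref{eq:psiphisys} in the source to rewrite $\psif{U,D}$ as a series in the $\psi_m=\psif{\cwv_m,D}$, and finally recognize the remaining integral as $W_{mn}[D]$.

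First, for the source term $U(x)=e^{ik_0\xi\cdot x}$ with unit vector of direction $\xi$ at angle $\xi_1=\theta_\xi$, the Jacobi--Anger expansion (which was already used in \eqref{eq:JA_plw}) gives
\begin{align*}
  U(y) \;=\; e^{ik_0|y|\cos(\theta_y-\xi_1)} \;=\; \sum_{m\in\Z} e^{im(\frac\pi 2-\xi_1)} \cwv_m(y).
\end{align*}
By the linearity of system \eqref{eq:psiphisys} in its right-hand side and the principle of superposition (as already invoked to derive \eqref{eq:psipsim}), the corresponding density satisfies
\begin{align*}
  \psif{U,D}(y) \;=\; \sum_{m\in\Z} e^{im(\frac\pi 2-\xi_1)} \psi_m(y), \quad y\in\partial D.
\end{align*}

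Next I handle the kernel $e^{-ik_0|y|\cos(\theta_x-\theta_y)}$ in \eqref{eq:far_field_pattern_def} with $\theta_x=\xi_2$. Taking the complex conjugate of the Jacobi--Anger expansion and relabelling the index $n\mapsto -n$ (or, equivalently, applying the identity $\cwv_n(-y)=\overline{\cwv_{-n}(y)}$ in the same spirit as at the end of the proof of Proposition~\ref{prop:SCT-tsr}) yields
\begin{align*}
  e^{-ik_0|y|\cos(\xi_2-\theta_y)} \;=\; \sum_{n\in\Z} e^{-in(\frac\pi 2-\xi_2)} \overline{\cwv_n(y)}.
\end{align*}

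Substituting the two expansions into the definition \eqref{eq:far_field_pattern_def} and interchanging sum and integral gives
\begin{align*}
  \Af_D(\xi;\omega) \;=\; \sum_{m,n\in\Z} e^{im(\frac\pi 2-\xi_1)}e^{-in(\frac\pi 2-\xi_2)} \int_{\partial D} \overline{\cwv_n(y)}\,\psi_m(y)\,d\sigma(y),
\end{align*}
and the integral is exactly $W_{mn}[D,\ve,\mu,\omega]$ by definition \eqref{eq:sct}.

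The only non-routine point is justifying the interchange of summation and integration. This is standard but worth a line: the decay bound \eqref{eq:control_SCT} for $|W_{mn}|$, combined with the trivial bound $|e^{im(\pi/2-\xi_1)}e^{-in(\pi/2-\xi_2)}|=1$, shows that the double series on the right of \eqref{eq:Fourier_SCT} converges absolutely, and the same decay together with the Bessel bound \eqref{eq:Bessel_bound_JY} guarantees that $\sum_m e^{im(\pi/2-\xi_1)}\psi_m$ converges in $L^2(\partial D)$, so Fubini applies. This is the main (and only) obstacle; the rest is bookkeeping of indices.
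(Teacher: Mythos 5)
Your proof is correct. Note that the paper itself gives no proof of Proposition~\ref{prop:farfieldpat_SCT} --- it simply cites the reference where the identity is established --- so there is nothing internal to compare against; but your route (Jacobi--Anger expansion of the incident plane wave, superposition to get $\psif{U,D}=\sum_m e^{im(\pi/2-\xi_1)}\psi_m$, the conjugated expansion of the kernel $e^{-ik_0|y|\cos(\xi_2-\theta_y)}$ into $\overline{\cwv_n(y)}$, and identification of the resulting integrals with \eqref{eq:sct}) is exactly the natural one, and it is consistent with how the same ingredients are combined in the derivation of \eqref{eq:Vrs_ex} and with the far-field relation \eqref{eq:Vsr_far_field}. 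Your justification of the interchange of sum and integral via \eqref{eq:control_SCT}, \eqref{eq:Bessel_bound_JY} and the energy estimate \eqref{eq:phipsi_energy_control} is also adequate.
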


\begin{rmk}
  For the circular acquisition system mentioned in Section \ref{sec:full-angle-view}, the modulus
  of the far-field pattern can be read off directly from the measurement when the radius $R$ and the
  number of sources $N_s$ and receivers $N_r$ are large, thanks to
  \eqref{eq:Vsr_far_field}. See Figure~\ref{fig:ffpattern_Vsr}.
\end{rmk}

\graphicspath{{./figures/}}
\def\figwidth{7cm}
\begin{figure}[htp]
  \centering
  \subfigure[$\Abs{V_{sr}}$]{\includegraphics[width=\figwidth]{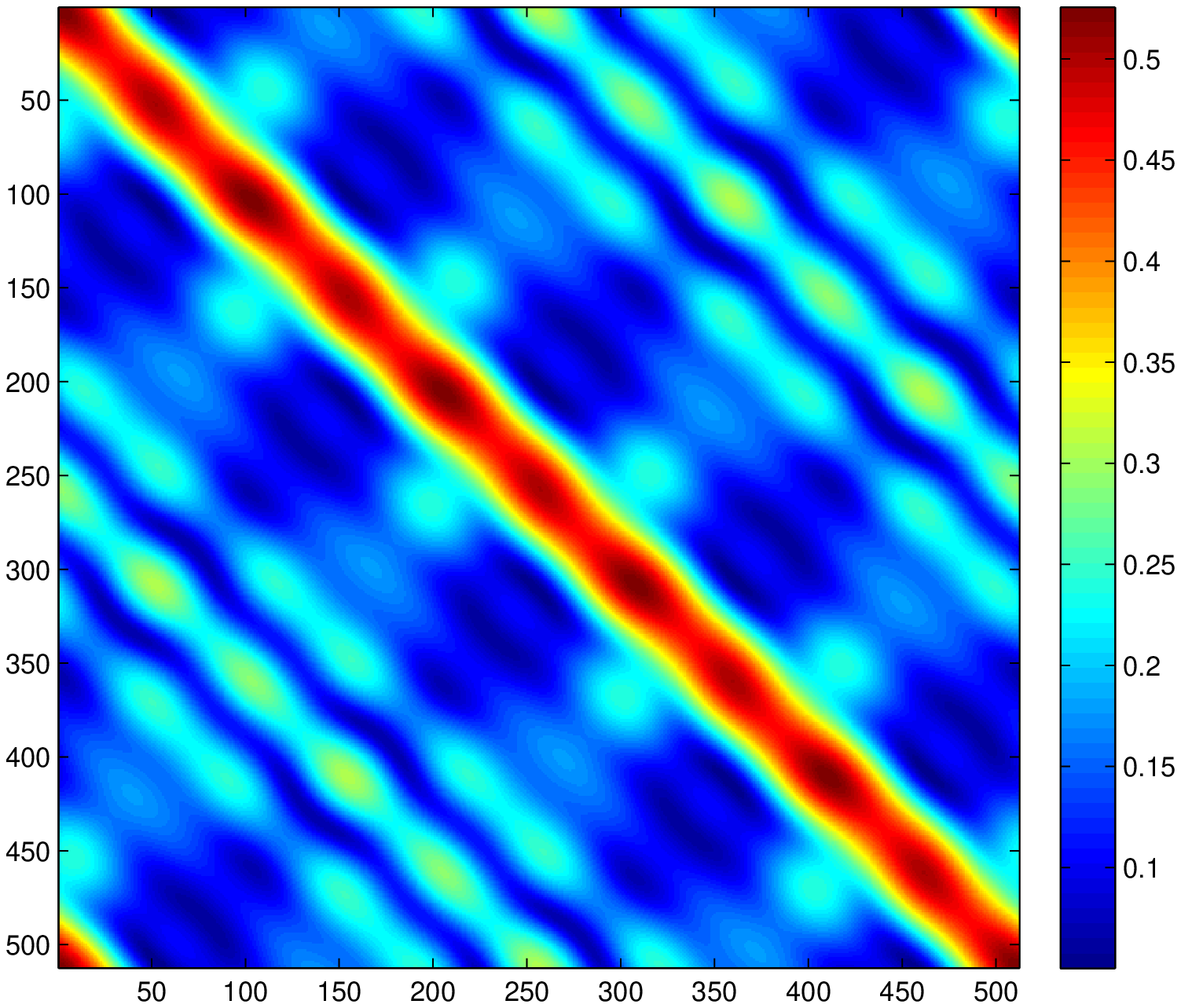}}
  \subfigure[$\Abs{\Af_D(\cdot;2\pi)}/\sqrt{8\pi k_0
    R}$]{\includegraphics[width=\figwidth]{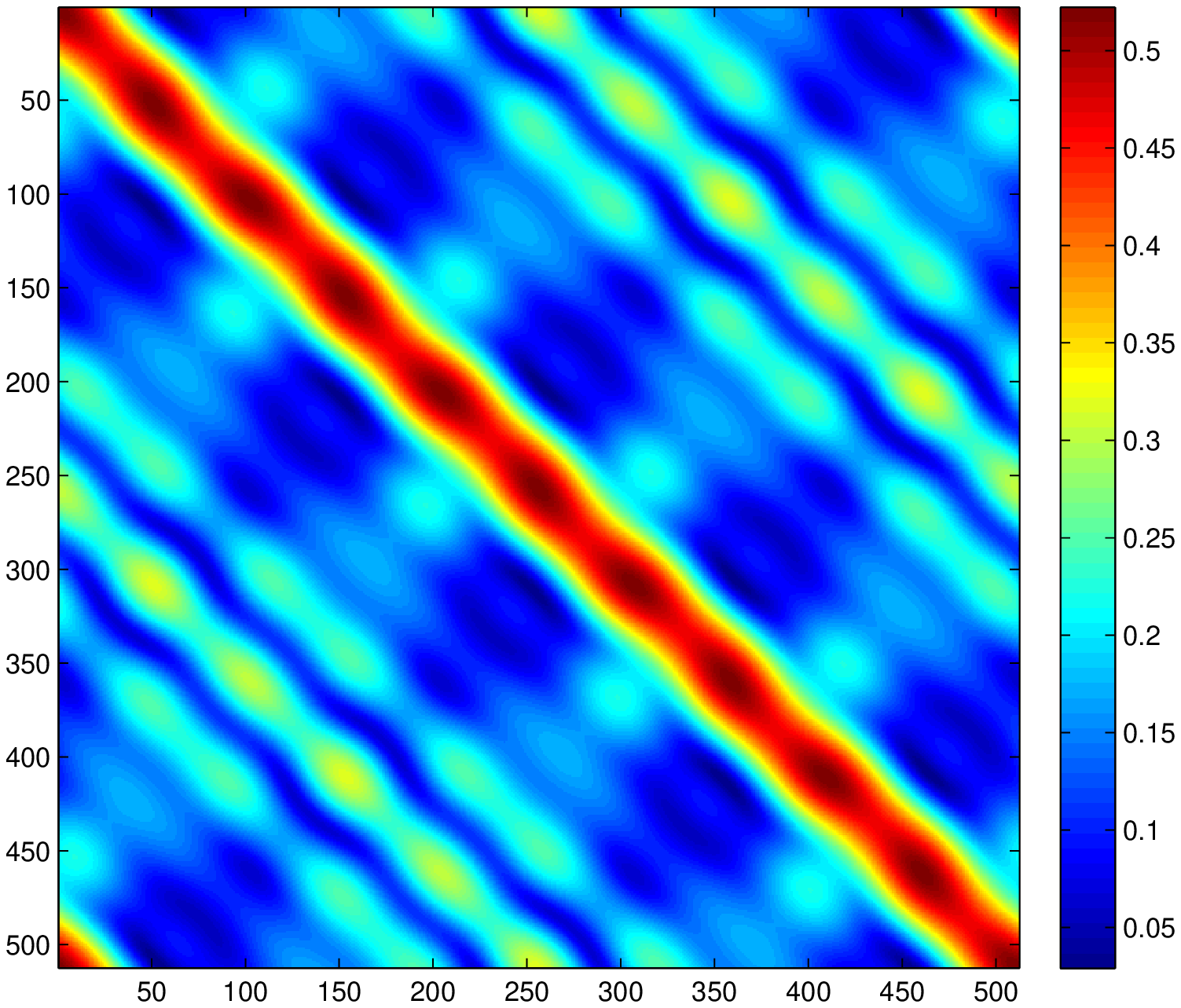}}
  \caption{(a) Modulus of the measurement of a flower-shaped object of size 1 with $N_s=512$ sources
    and $N_r=512$ receivers at the frequency $\omega=2\pi$ and the radius $R=10$, (b)
    $\Abs{\Af_D(\cdot;2\pi)}/\sqrt{8\pi k_0 R}$ computed via \eqref{eq:Fourier_SCT}. The relative
    difference between (a) and (b) is about $10^{-2}$.}
  \label{fig:ffpattern_Vsr}
\end{figure}

We emphasize that the Fourier series in \eqref{eq:Fourier_SCT} converges thanks to the decay
\eqref{eq:control_SCT} of $W_{mn}$.  Then using the translation formula \eqref{eq:SCT-trl}, we
establish the relations between the far-field pattern of $D$ and $B$.
\begin{prop}
  \label{prop:Fourier_SCT_DB}
  Let $D=z+s R_\theta B$. We denote by $\theta_z$ the angle of $z$ in polar coordinate, and define
  $\phi_z(\xi):=e^{ik_0\abs z \cos(\xi_1-\theta_z)} e^{-ik_0\abs z \cos(\xi_2-\theta_z)}$. Then, we have
  \begin{align}
    \label{eq:Fourier_SCT_DB}
    \Af_D(\xi; \omega) = \phi_z(\xi)\, \Af_B(\xi_\theta; s\omega) \ \text{ with } \xi_\theta:=\xi
    - {(\theta,\theta)}^\top.
  \end{align}
\end{prop}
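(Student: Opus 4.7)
The plan is to combine Proposition~\ref{prop:farfieldpat_SCT} with the three transformation formulas \eqref{eq:SCT-trl}--\eqref{eq:SCT-scl} of Proposition~\ref{prop:SCT-tsr}, using the factorization $D = (sR_\theta B)^{z}$, that is, rotation, then scaling, then translation applied to $B$. Iterating the three rules on $W_{mn}[D,\ve,\mu,\omega]$ should yield a convolution-type expression in terms of $W_{m',n'}[B,\ve,\mu,s\omega]$, which is precisely the right structure to match the Fourier series \eqref{eq:Fourier_SCT}.

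Concretely, I would first apply \eqref{eq:SCT-trl} to get
\begin{align*}
W_{mn}[D,\ve,\mu,\omega] = \sum_{a,b\in\Z} \cwv_a(z)\,\overline{\cwv_b(z)}\,W_{m-a,n-b}[sR_\theta B,\ve,\mu,\omega],
\end{align*}
then \eqref{eq:SCT-rot} to peel off a factor $e^{i(m-a-n+b)\theta}$ and reduce to $W_{m-a,n-b}[sB,\ve,\mu,\omega]$, then \eqref{eq:SCT-scl} to replace $\omega$ by $s\omega$ and arrive at $W_{m-a,n-b}[B,\ve,\mu,s\omega]$. Next I would plug this into \eqref{eq:Fourier_SCT}, relabel $m\mapsto m+a$, $n\mapsto n+b$, and regroup the exponentials. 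The rotation phase $e^{\pm i(m-n)\theta}$ merges with $e^{\pm i(m,n)(\pi/2-\xi_{1,2})}$ to give $e^{\pm i(m,n)(\pi/2-(\xi_{1,2}-\theta))}$, so the inner double sum is exactly $\Af_B(\xi_\theta; s\omega)$.

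What is left are two independent sums, one in $a$ and one in $b$, which should evaluate to $\phi_z(\xi)$ via Jacobi--Anger. Indeed, using \eqref{eq:JA_plw} with $x=z$ and direction angle $\xi_1$ gives
\begin{align*}
\sum_{a\in\Z} \cwv_a(z)\, e^{i a(\pi/2 - \xi_1)} = e^{i k_0 |z|\cos(\xi_1 - \theta_z)},
\end{align*}
and the $b$-sum is the complex conjugate of the analogous identity with $\xi_2$, producing the factor $e^{-ik_0|z|\cos(\xi_2-\theta_z)}$. Their product is exactly $\phi_z(\xi)$, which completes the identity.

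The only delicate point is bookkeeping of the conjugation: the appearance of $\overline{\cwv_b(z)}$ in \eqref{eq:SCT-trl} is what forces the sign asymmetry between the two exponentials in $\phi_z$, and it is easy to slip a sign here. Absolute convergence of the double sums — needed to justify all the index shuffles and to split the product — is not really an obstacle, since it follows from \eqref{eq:control_SCT} combined with the Bessel decay \eqref{eq:Bessel_bound_JY}, exactly as remarked at the end of the proof of Proposition~\ref{prop:SCT-tsr}.
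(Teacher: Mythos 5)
Your proposal is correct and follows essentially the same route as the paper: apply the translation formula \eqref{eq:SCT-trl} followed by the rotation and scaling rules to express $W_{mn}[D,\ve,\mu,\omega]$ as a convolution against $W_{m-a,n-b}[B,\ve,\mu,s\omega]$, then take the Fourier series \eqref{eq:Fourier_SCT} and evaluate the $a$- and $b$-sums via Jacobi--Anger to produce $\phi_z(\xi)$. The sign bookkeeping you flag (the conjugate $\widebar{\cwv_b(z)}$ yielding the $e^{-ik_0|z|\cos(\xi_2-\theta_z)}$ factor) is handled exactly as in the paper's argument.
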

\begin{proof}
  By the transformation formulas \eqref{eq:SCT-tsr}, we have
  \begin{align}
    \label{eq:Wmn_DB}
    W_{mn}[D, \ve, \mu, \omega] &= \sum_{a,b\in\Z} \cwv_a(z) \widebar{\cwv_b(z)}
    W_{m-a,n-b}[s R_\theta B,\ve, \mu, \omega] \notag \\
    &= \sum_{a,b\in\Z} \cwv_a(z) \widebar{\cwv_b(z)} e^{i(m-a)\theta}e^{-i(n-b)\theta}
    W_{m-a,n-b}[B, \ve, \mu, s\omega].
  \end{align}
  Using the Jacobi-Anger expansion, the Fourier series of the cylindrical wave reads:
  \begin{align*}
    \sum_{a\in\Z} \cwv_a(z) e^{i a (\frac \pi 2 - \xi_1)} = \sum_{a\in\Z} J_a(k_0\abs z) e^{i a
      (\frac \pi 2 - (\xi_1-\theta_z))} = e^{i k_0\abs z
      \cos(\xi_1-\theta_z)}.
  \end{align*}
  Therefore, computing the Fourier series on the both sides of \eqref{eq:Wmn_DB} yields
  \eqref{eq:Fourier_SCT_DB}.
\end{proof}


Hereafter, we illustrate the descriptor construction based on the scattering amplitude. 
We define the following quantity:
\begin{align}
  \label{eq:Invar_S}
  \mathcal S_D(v; \omega) := \int_{[0,2\pi]^2} \Abs{\Af_D(\xi; \omega){\Af_D(\xi-v; \omega)}} d\xi
  .
\end{align}
More precisely, taking the modulus on $\Af_D$ removes the effect of the translation, and the
auto-correlation in $\xi$ removes that of the rotation. In fact, using \eqref{eq:Fourier_SCT_DB}:
\begin{align}
  \label{eq:Shapedescrp_DB}
  \mathcal S_D(v; \omega) = \int_{[0,2\pi]^2} \abs{\Af_B(\xi_\theta; s\omega)
    \Af_B(\xi_\theta-v; s\omega)} d\xi = \mathcal S_B(v; s\omega),
\end{align}
where the last identity comes from the periodicity in $\xi$ of $\Af_B(\xi; \omega)$. We name
\eqref{eq:Invar_S} the \emph{shape descriptor}. The shape descriptor $\mcl S_D(v;\omega)$ is invariant to any translation and rotation.
Remark that for fixed $\omega$, it carries the information of the far-field
pattern of the shape, while for fixed $v\in[0,2\pi]^2$,  it shows how the far-field pattern varies as a function of the frequency.

\subsection{Scale estimation}
\label{sec:estim-scal}

From $\mcl S_D(v;\omega)$ we could hopefully construct, at least formally, some quantities $\mcl
I_D(v)$ being further invariant to the scaling, and obtain in that way shape descriptors easily
adapted for the purpose of identification like in \cite{ammari_target_2012}. For example, one may
consider the Hilbert transform in $\omega$ of $\mcl S_D$:
\begin{align}
  \label{eq:Hilbert_t}
  \mathcal{I}_D(v) := \int_0^{+\infty} {\frac{\mcl S_D(v; \omega)}{\omega}}
  d\omega,
\end{align}
which is invariant to any translation, rotation and scaling, since
\begin{align}
  \label{eq:Invar_tsr_Hilbert}
  \mathcal{I}_D(v) = \int_0^{+\infty} {\frac{\mcl S_B(v; s\omega)}{\omega}} d\omega = \int_0^{+\infty}
  {\frac{\mcl S_B(v; \omega)}{\omega}} d\omega = \mcl I_B(v).
\end{align}

Not to mention the well-definedness of \eqref{eq:Hilbert_t}, let
us point out immediately the physical infeasibility of such kind
of quantities, whatever the method in use. In fact, the frequency
$\omega$ is coupled with the scaling factor $s$ in the relation
\eqref{eq:Shapedescrp_DB}, hence if the knowledge of $\mcl S_D(v;
\omega)$ in the frequency $\omega$ is bounded away from $0$ or
infinity, then it is not possible to construct an $\mcl I_D$ being
invariant to arbitrary scaling factor $s\in\R^+$, which would
require the frequency at 0 or infinity.


\subsubsection{Determine $s$ by lookup table}
\label{sec:determ-lookup-table}

We assume that the physical operating frequency $0<\omin\leq \omega \leq \omax<\infty$, and $0<\smin
\leq s \leq \smax<\infty$ which means the inclusions that we are interested in should not be too
small nor to large. Then we can estimate $s$ by solving
\begin{align}
  \label{eq:optim_scaling_matching}
  \sest = \argmin_{s\in[\smin, \smax]} \Set{\int_\omin^\omax\Paren{\int_{[0,2\pi]^2} [\mcl S_D(v;\omega)-\mcl
    S_B(v;s\omega)]\, dv}^2 d\omega},
\end{align}
which is a lookup table technique since we are seeking the best value of $s$ in the table
$\int_{\sss{[0,2\pi]^2}} \mcl S_B(v; s\omega) dv$ indexed by $s$, by comparing with
$\int_{\sss{[0,2\pi]^2}} \mcl S_D(v; \omega) dv$. This idea is illustrated in Figure~\ref{fig:SDSB}.
Remark that the functional in \eqref{eq:optim_scaling_matching} is minimized at the value of the
true scaling factor, although the minimizer might not be unique.

\graphicspath{{./figures/}}
\def\figwidth{7cm}
\begin{figure}[htp]
  \centering
  \subfigure[]{\includegraphics[width=\figwidth]{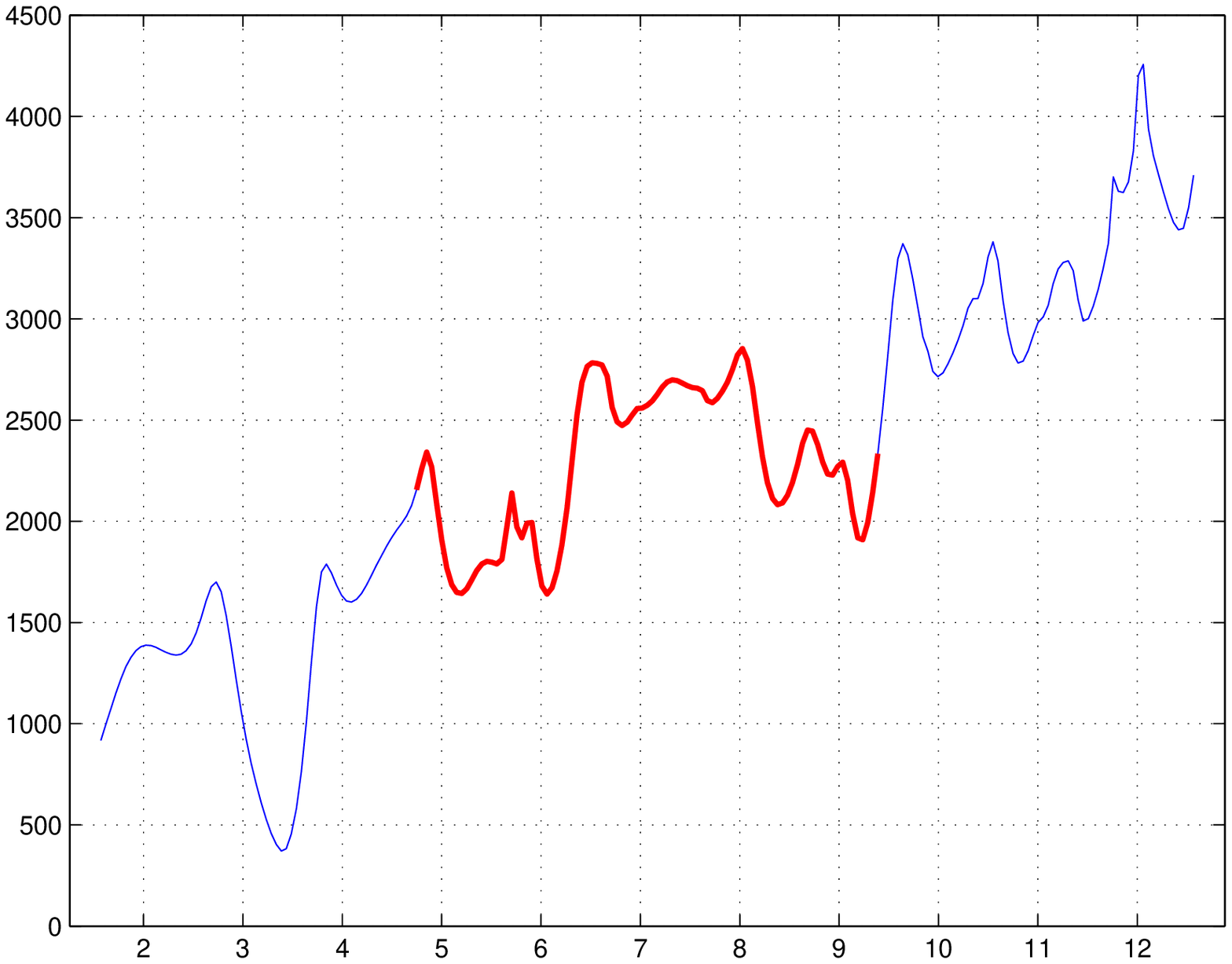}}
  \subfigure[]{\includegraphics[width=\figwidth]{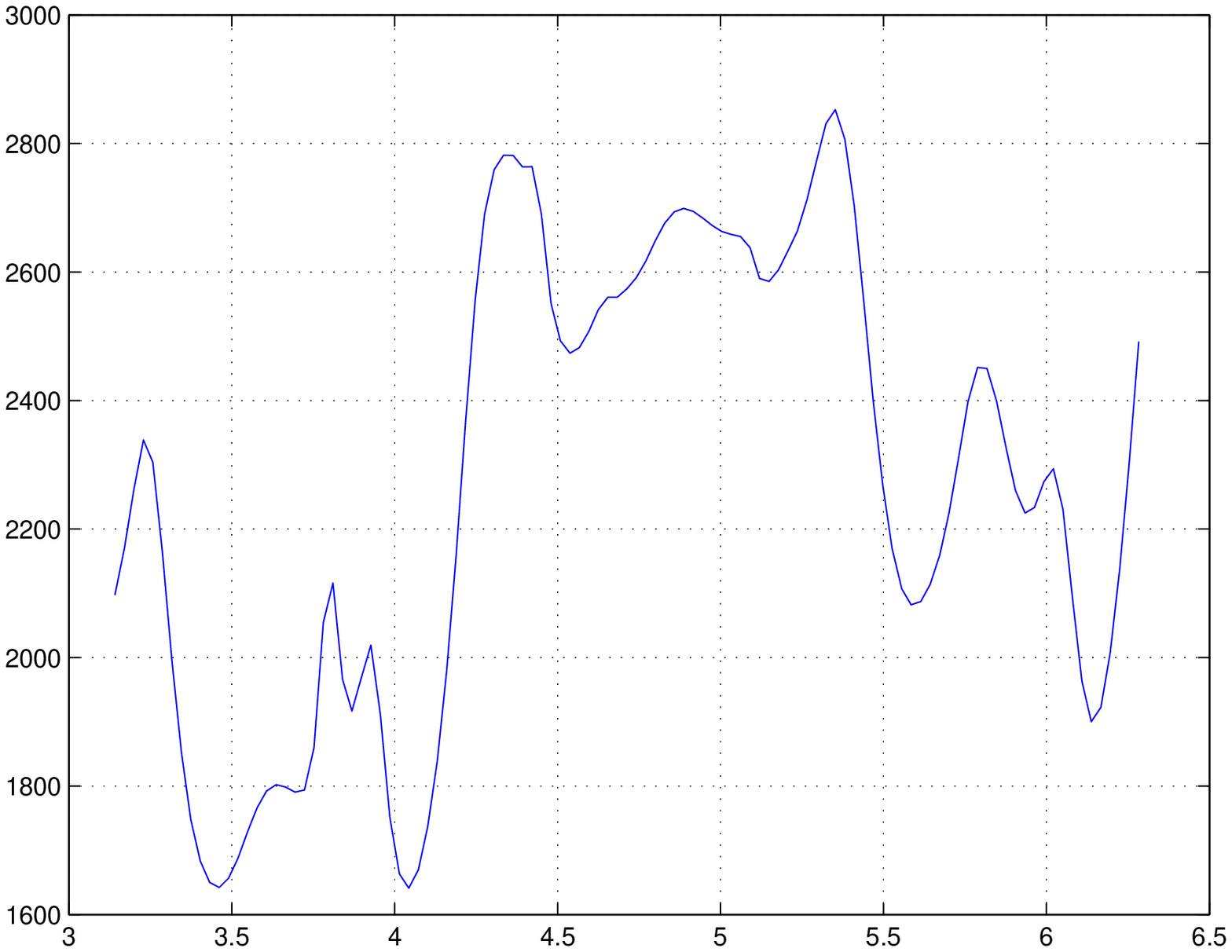}}
  \caption{Shape descriptors of a flower-shaped object $B$ and of $D=1.5\times B$, as functions of
    the frequency $\omega$. (a) $\int_{\sss{[0,2\pi]^2}} \mcl S_B(v; \omega) dv$ with
    $\omega\in[0.5\pi, 4\pi]$. (b) $\int_{\sss{[0,2\pi]^2}} \mcl S_D(v; \omega) dv$ with
    $\omega\in[\pi, 2\pi]$. The red part in (a) corresponds to the frequency $\omega\in[1.5\pi,
    3\pi]$.}
  \label{fig:SDSB}
\end{figure}

\subsubsection{Numerical implementation of the scaling estimation}
\label{sec:numer-impl}

Numerically, \eqref{eq:optim_scaling_matching} can be solved approximately by sampling. Let $\Nwdic,
\Nw, \Nv, N_s$ be positive integers. We define $\set{\omegar_l, l=0\ldots \Nwdic}$ uniformly
distributed points on $[\ormin, \ormax]$, with $\ormin:=\omin\smin, \ormax:=\omax\smax$. Similarly,
let $\set{\omega_k, k=0\ldots \Nw}$ be uniformly distributed on $[\omin, \omax]$, and
$\set{(v^1_i,v^2_j)^\top, i,j=1\ldots \Nv}$ be uniformly distributed in $[0,2\pi]^2$. Then we sample
the functions $\mcl S_B$ and $\mcl S_D$ at discrete positions as follows:
\begin{align}
  \label{eq:SD_ijk_SB_ijl}
  \mcl S^D_{ijk}:= \mcl S_D({(v^1_i, v^2_j)}^\top; \omega_k), \ \mcl S^B_{ijl}:= \mcl S_B({(v^1_i,
    v^2_j)}^\top; \omegar_l).
\end{align}
For $\set{s_t, t=0\ldots, N_\delta}$ uniformly distributed in $[\smin, \smax]$, we can approximate the
functional inside the argmin in \eqref{eq:optim_scaling_matching} by a discrete version:
\begin{align}
  \label{eq:optim_scaling_discrete}
  \mcl J(t; D, B) = \sum_{k=0}^{\Nw} \sum_{l\in
      I_k(s_t)} \Paren{\sum_{i,j=1}^\Nv \Paren{\mcl S^D_{ijk} - \mcl S^B_{ijl}}}^2
\end{align}
with the index set $I_k(s):=\set{1\leq l \leq \Nwdic, \text{ s.t. } \omegar_{l-1}\leq s\omega_k \leq
  \omegar_l}$. Finally, the scaling is estimated through
\begin{align}
  \label{eq:err_DB}
  \ve(D,B) = \min_{t=0\ldots N_\delta} \mcl J(t; D, B).
\end{align}

\subsection{Frequency-dependent dictionary and identification algorithm}
\label{sec:freq-depend-dict}

For a collection of standard shapes $\set{B_n}_{n=1\ldots N}$, we
pre-compute the discrete samples $\Paren{\mcl
S^{B_n}_{ijl}}_{ijl}$ of the shape descriptor $\mcl
S_{B_n}(v;\omega)$ as in \eqref{eq:SD_ijk_SB_ijl} for
$v\in[0,2\pi]^2$ and $\omega\in[\ormin, \ormax]$. This constitutes
a frequency-dependent dictionary of shape descriptors.


Suppose $D$ is the realization of a shape from the dictionary
$\set{B_n}_n$, up to some unknown translation, rotation and
scaling, and the scaling factor satisfies $\smin\leq s\leq \smax$.
In order to identify $D$ from the dictionary, we compute the
discrete samples $\Paren{\mcl
  S^{D}_{ijk}}_{ijk}$ of the shape descriptor $\mcl S_D(v;\omega)$ as in \eqref{eq:SD_ijk_SB_ijl},
and calculate $\ve(D, B_n)$ in \eqref{eq:err_DB} for all shapes of the dictionary. The true shape is expected to give the best estimation of scaling and to minimize the error $\ve(D,B_n)$ among all the
dictionary elements, thus we take the minimizer of $\set{\ve(D,B_n)}_n$ as the identified shape. This
procedure is described in Algorithm \ref{alg:shape_identify}.
\begin{algorithm}[H]
  \caption{Shape identification algorithm}
  \label{alg:shape_identify}
  \begin{algorithmic}
    \STATE Input: {$\Paren{\mcl S^{D}_{ijk}}_{ijk}$ of unknown shape $D$, $\dicoSB$ of the whole dictionary}
    \FOR{$B_n$ in the dictionary}
    \STATE $\ve_n \leftarrow \ve(D, B_n)$;
    \STATE $n \leftarrow n+1$;
    \ENDFOR
    \STATE Output: {The true dictionary element $n^* \leftarrow \argmin_n \ve_n$.}
  \end{algorithmic}
\end{algorithm}

\subsection{Shape descriptor with partial far-field pattern}
\label{sec:shape-descr-partial}

Let us mention an interesting property of $\mcl S_D$ at the end of this section. Given $0<\alpha\leq
2\pi$, we denote the partial far-field pattern $\Af_D$
\begin{align}
  \label{eq:partial_far_field}
  \wt{\Af_D}(\xi;\omega):={\Af_D}(\xi;\omega)\mathbbm{1}(\abs{\xi_1-\xi_2}\leq \alpha),
\end{align}
\ie, only the band diagonal of width $\alpha$ in $\Af_D(\cdot; \omega)$ is available. Since
\begin{align*}
  \mathbbm{1}(\abs{\xi_1-\xi_2}\leq \alpha) = \mathbbm{1}(\abs{(\xi_1-\theta)-(\xi_2-\theta)}\leq \alpha),
\end{align*}
so for the same reason as in \eqref{eq:Shapedescrp_DB} we deduce that
\begin{align*}
  \int_{[0,2\pi]^2} \Abs{\wt{\Af_D}(\xi; \omega) \wt{\Af_D}(\xi-v; \omega)} d\xi =
\int_{[0,2\pi]^2} \Abs{\wt{\Af_B}(\xi_\theta; s\omega) \wt{\Af_B}(\xi_\theta-v; s\omega)} d\xi.
\end{align*}
Therefore, the shape descriptor $\mcl S_D$ computed with $\wt{\Af_D}$ is still invariant to
translation and rotation. Assuming that the dictionary $\dicoSBn$ is also computed using partial far
field pattern $\wt{\Af_{B_n}}$ of the same constant $\alpha$, then one can always apply
Algorithm~\ref{alg:shape_identify} for shape identification.

\section{Numerical experiments}
\label{sec:numer-exper}

In the rest of this paper, we present numerical results
demonstrating the theoretical framework presented in previous
sections. Given a shape $D$, the overall procedure of a numerical
experiment consists of the following steps:
\begin{enumerate}
\item Data simulation. The MSR matrix $\V$ is simulated for a range of frequency $[\omin, \omax]$ by
  evaluating the integral representation \eqref{eq:solu}. Then the white noise is of level
  \begin{align*}
    \snoise = \sigma_0 \norm{\V}_F/\sqrt{N_sN_r}
  \end{align*}
  is added as in \eqref{eq:AWBE_mat_noise}. $\sigma_0$ is the percentage of the noise in the
  measurement and it is proportional to $\SNR^{-1}$.
\item Reconstruction of scattering coefficients. For each frequency, we reconstruct $\W$ either by
  solving the least-squares problem \eqref{eq:least_square} (or directly by the formula
  \eqref{eq:L_pseudoinv} in case of the full aperture of view). This step is skipped in the case of
  limited aperture of view, see Section~\ref{sec:shape-ident-with-lim-view}.
\item Shape identification. We calculate the shape descriptors and follow the procedure described in
  Algorithm~\ref{alg:shape_identify} for identification.
\end{enumerate}

Our dictionary of shapes consists of eight elements as shown in
Figure~\ref{fig:8shapes}. All shapes share the same permittivity
$\ve_*=3$ and the permeability $\mu_*=3$, while the values of the
background are $\ve_0=1, \mu_0=1$. The frequency-dependent
dictionary of shape descriptors $\dicoSB$ is computed for the
frequency range $[\ormin,\ormax]=[0.5\pi,4\pi]$ with $\Nwdic=219$
and $\Nv=512$. The data are simulated for the operating frequency
range $[\omin,\omax]=[\pi, 2\pi]$ with $\Nw=109$ and the valid
scaling range is $[\smin, \smax]=[0.5, 2]$ with $N_\delta=751$.

\graphicspath{{./figures/dico/}}
\def\figwidth{3cm}

\begin{figure}[htp]
  \centering
  \subfigure[Ellipse]{\includegraphics[width=\figwidth]{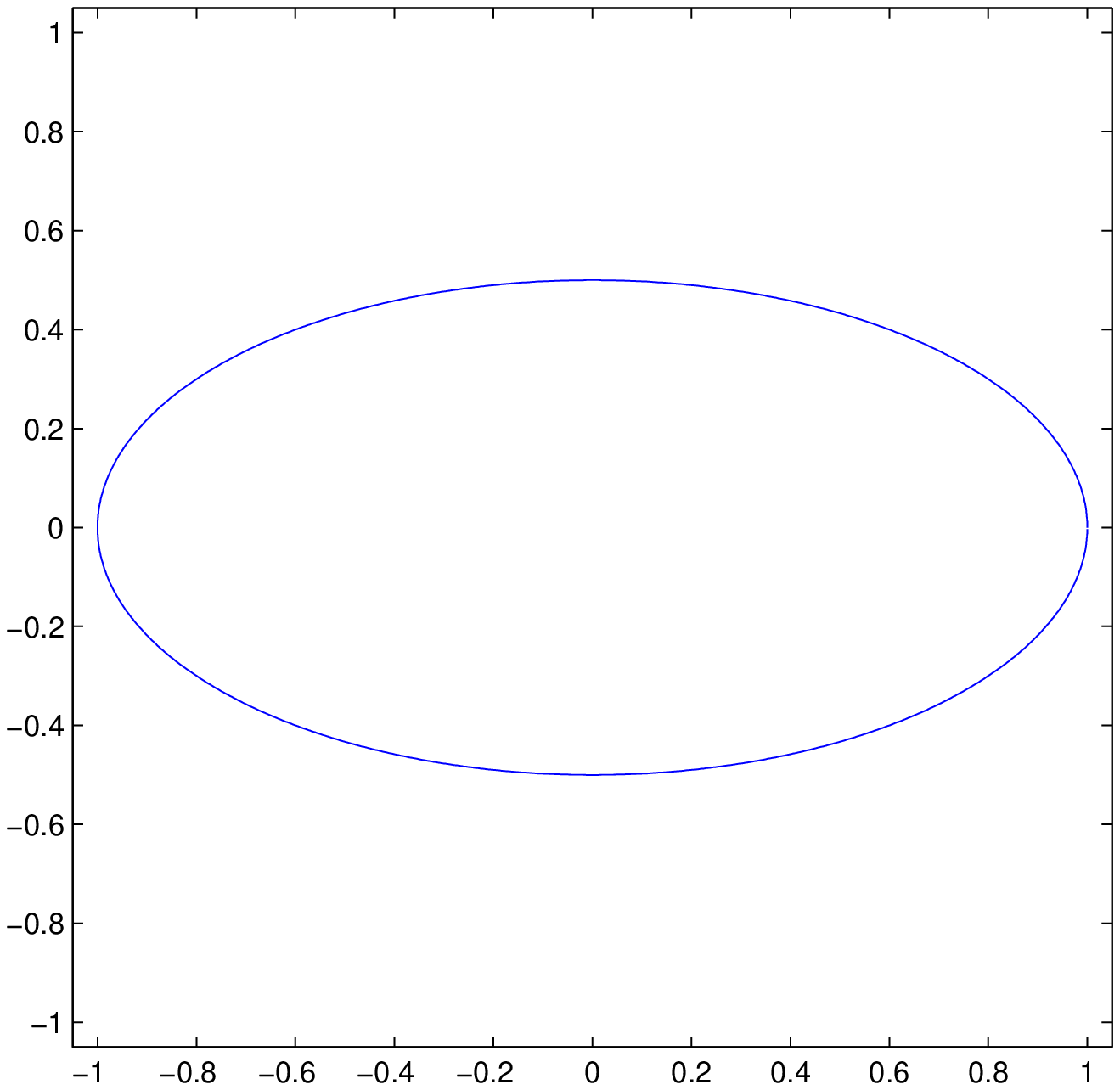}}
  \subfigure[Flower]{\includegraphics[width=\figwidth]{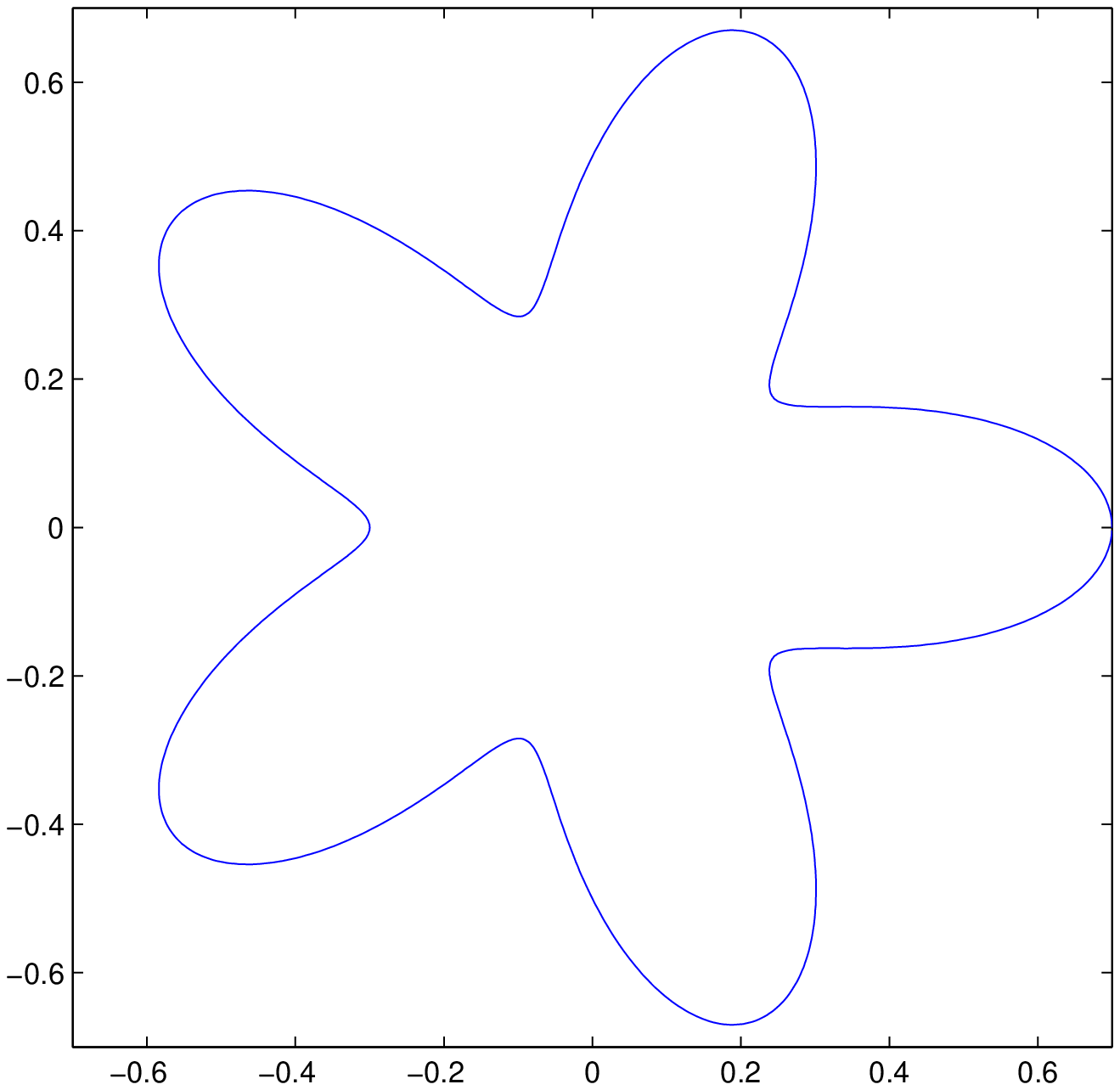}}
  \subfigure[Letter A]{\includegraphics[width=\figwidth]{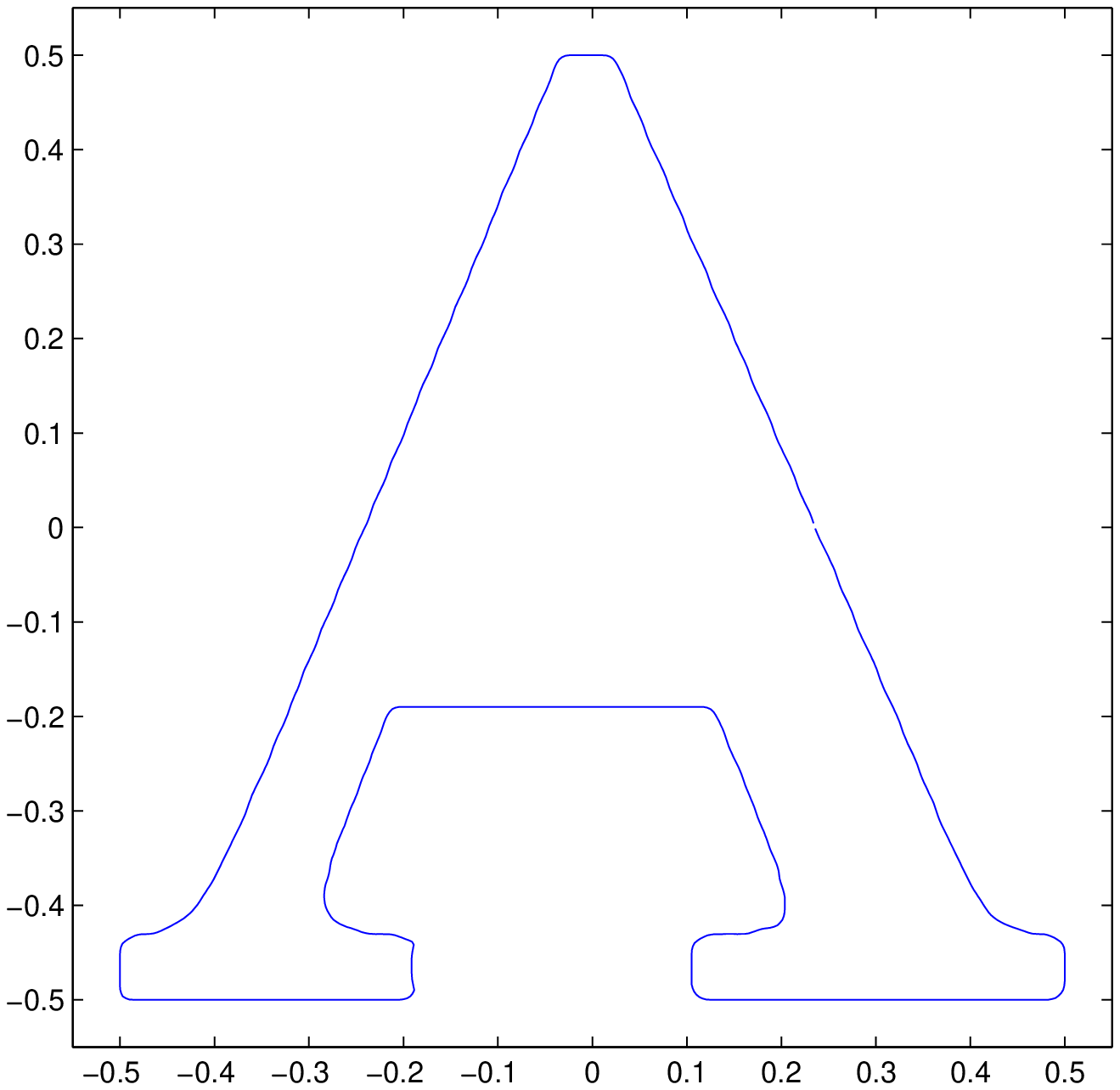}}
  \subfigure[Square]{\includegraphics[width=\figwidth]{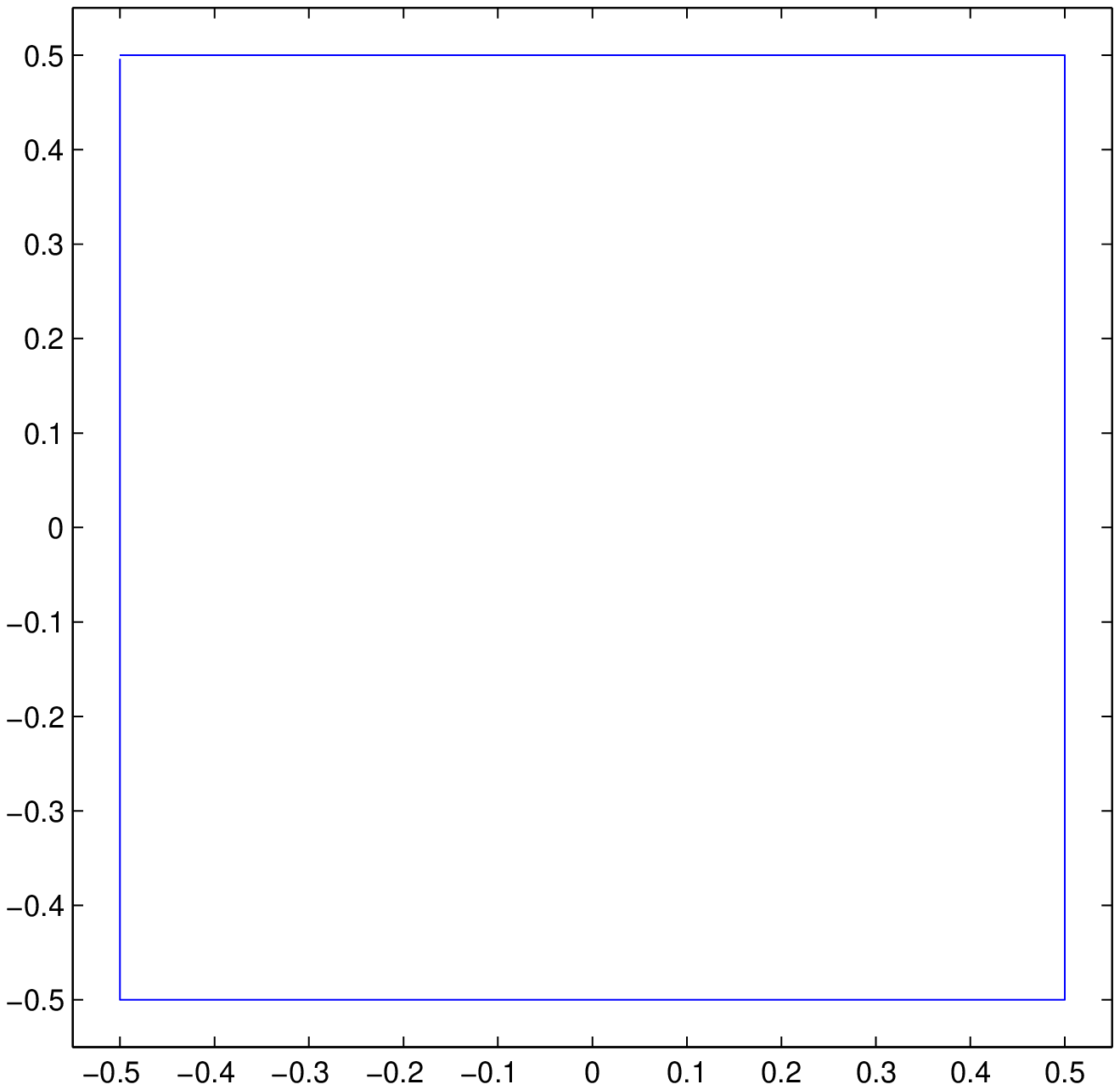}}
  \subfigure[Letter E]{\includegraphics[width=\figwidth]{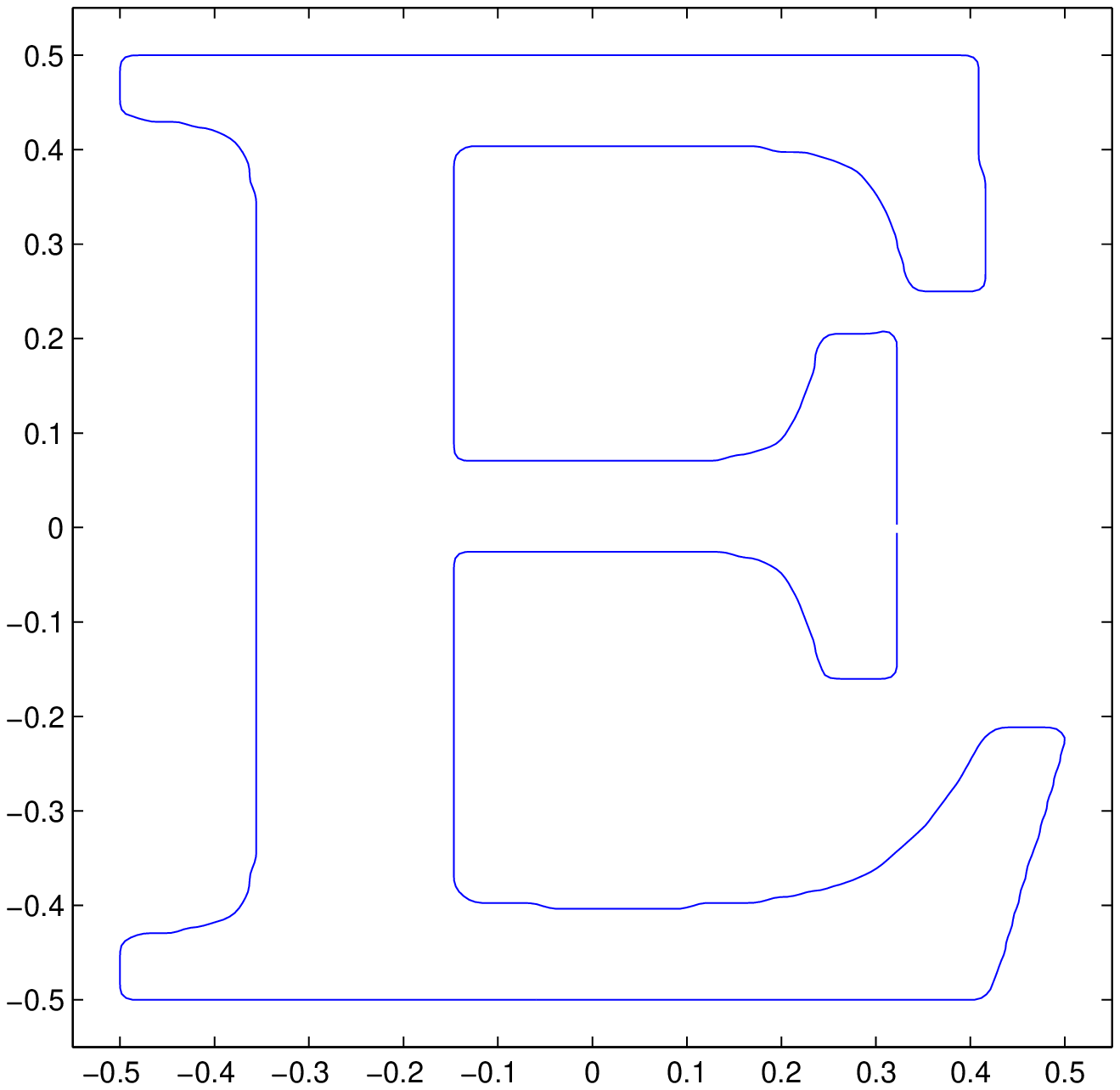}}
  \subfigure[Rectangle]{\includegraphics[width=\figwidth]{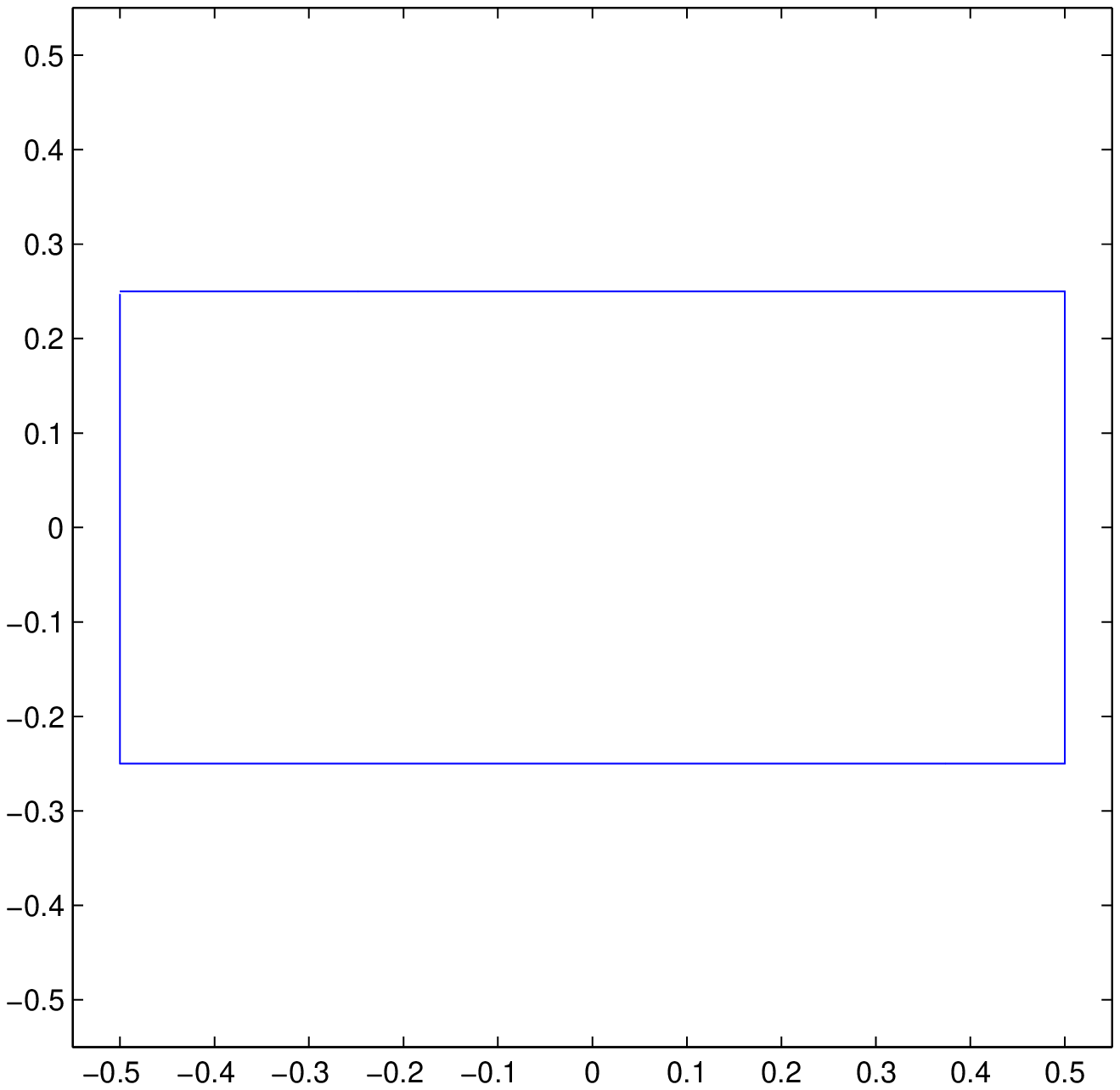}}
  \subfigure[Disk]{\includegraphics[width=\figwidth]{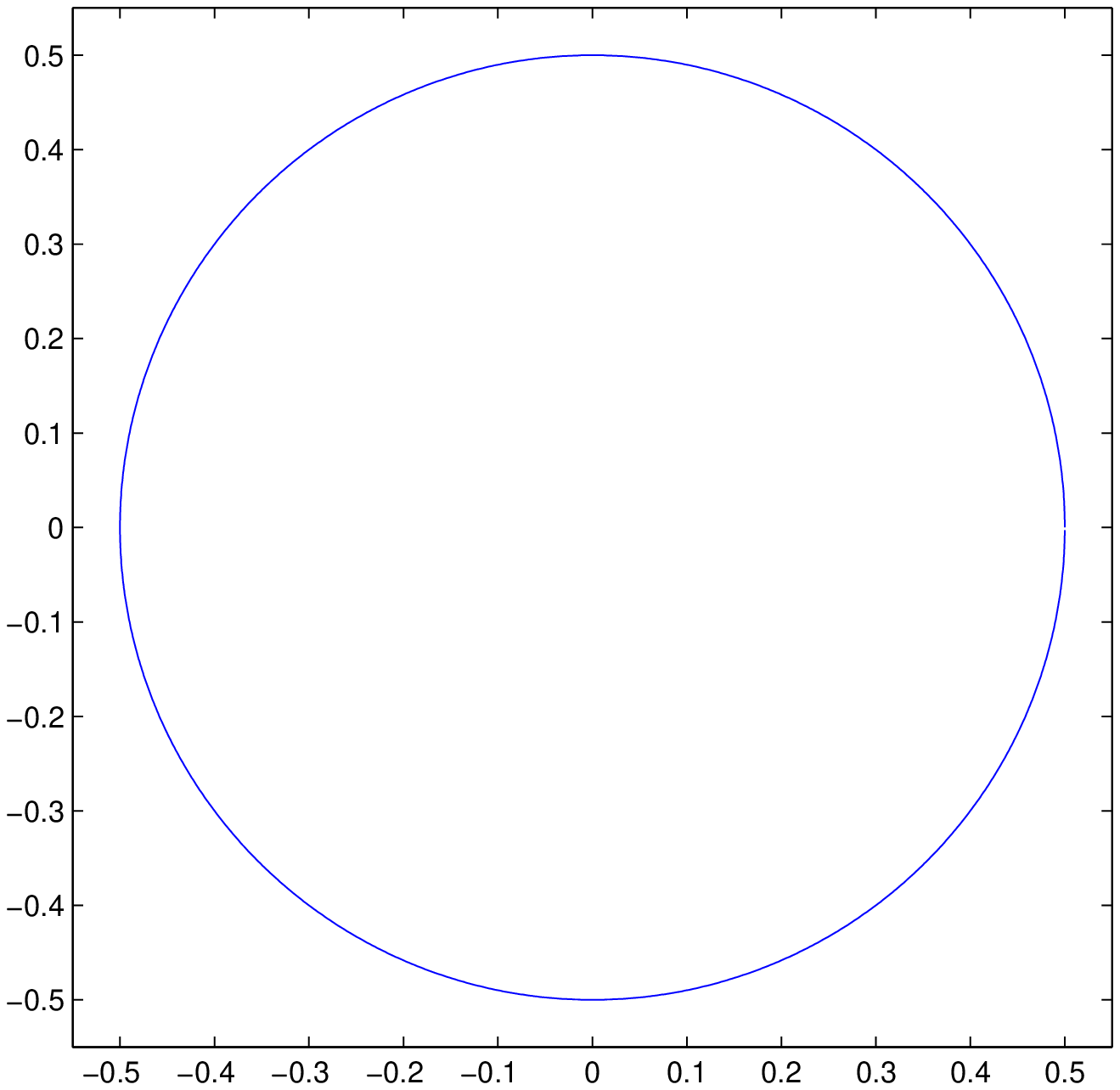}}
  \subfigure[Triangle]{\includegraphics[width=\figwidth]{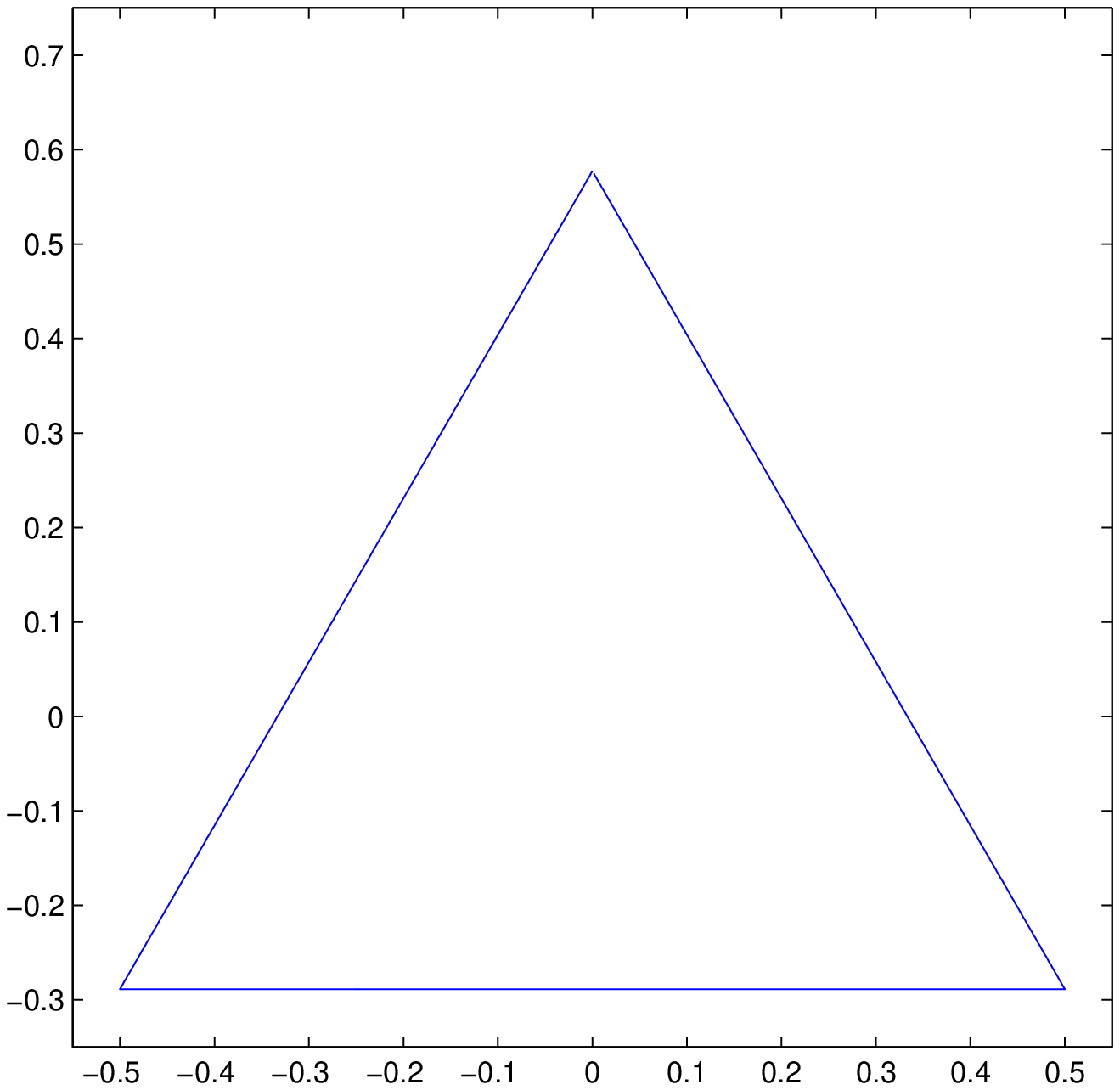}}
  \caption{A small dictionary of shapes.}
  \label{fig:8shapes}
\end{figure}

\subsection{Experiment and parameter settings}
\label{sec:exp-parm-settings}

We generate a circular acquisition system like in Section~\ref{sec:full-angle-view} with the radius
$R=3$, and $N_s=91$ plane waves as sources and $N_r=91$ receivers. For reason of simplicity, we
always choose the center $z_0=[0,0]^\top$. Figure~\ref{fig:acqsys} (a) illustrates this acquisition
system.

Another possibility is shown in Figure~\ref{fig:acqsys} (b), where the sources and receivers are
divided into different groups of aperture angle $\alpha$ such that no intercommunication exists between
groups. Such an acquisition is close to that used by the bat. Each group in Figure~\ref{fig:acqsys}
(b) represents the spatial position of a flying bat's body, which sends plane waves with limited
aperture of wave direction and receives the scattered field via receptors on its body. By flying
around the target and taking measurement at many positions, the bat actually acquires data
corresponding to the band diagonal part of the MSR matrix of a full aperture of view system, and the
width of the band diagonal is the number of the receivers (or proportional to $\alpha$).

\graphicspath{{./figures/}}
\def\figwidth{6.5cm}
\begin{figure}[htp]
  \centering
  \subfigure[Full view]{\includegraphics[width=\figwidth]{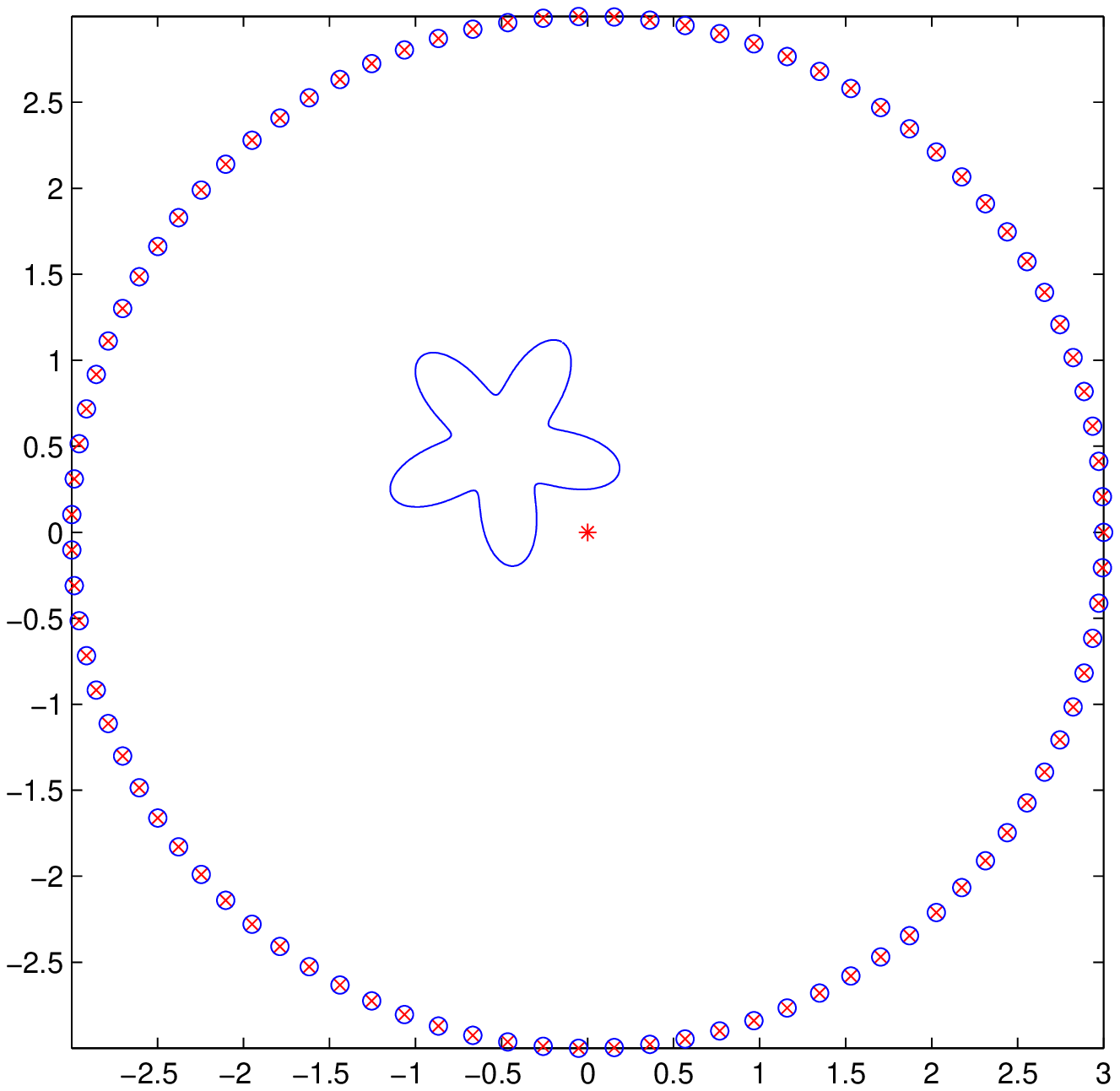}}
  \subfigure[Limited view]{\includegraphics[width=\figwidth]{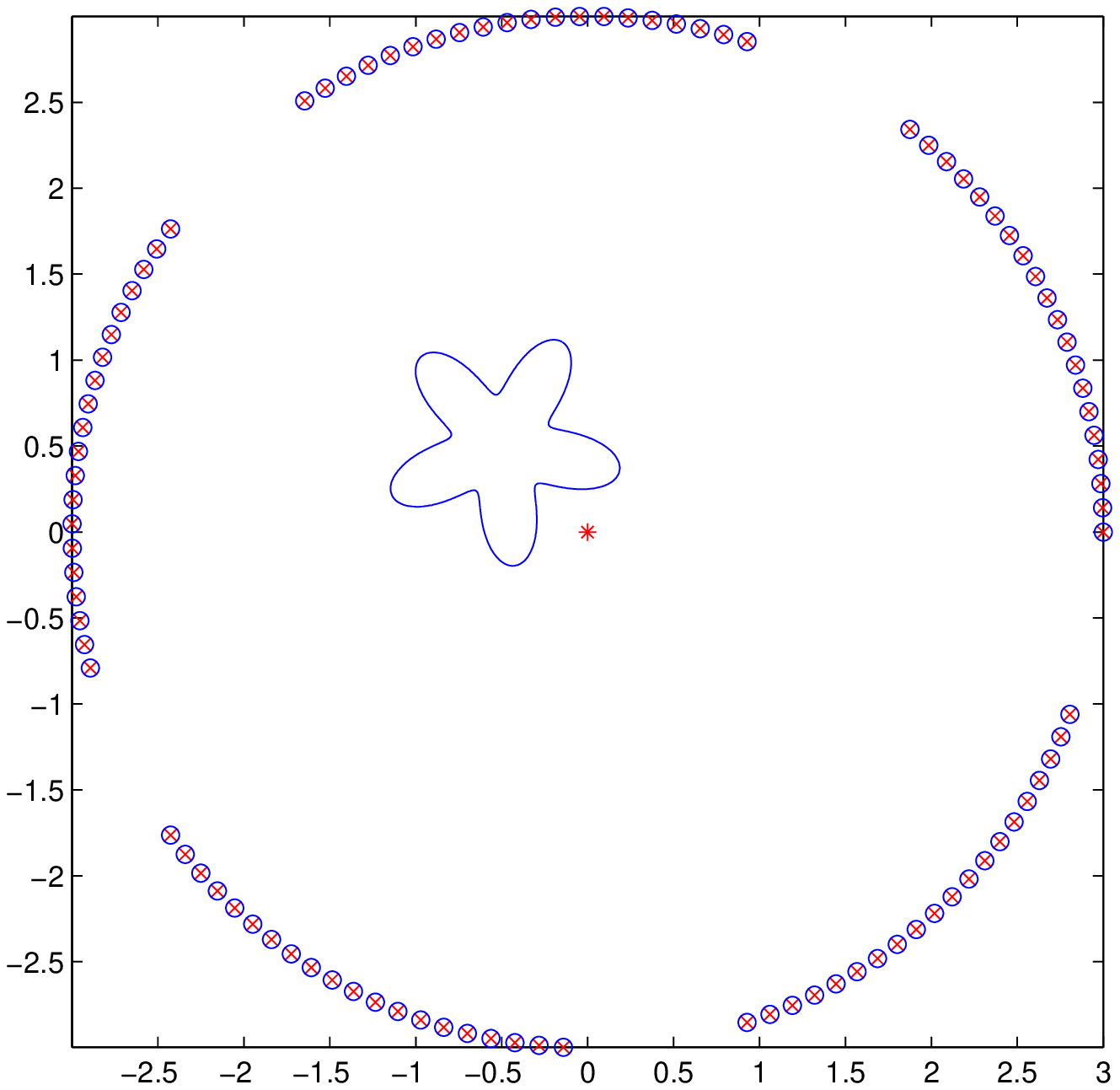}}
  \caption{Circular acquisition systems. (a) Full aperture with $N_s=91$ plane wave sources (with
    the angular position marked by 'o') and $N_r=91$ receivers (marked by 'x'). (b) Limited aperture
    with the sources and receivers divided into 5 groups, and the receivers of one group can only
    see the sources of the same group. The measurement center $z_0$ in both cases is marked by '*'.}
  \label{fig:acqsys}
\end{figure}

Figure~\ref{fig:svd_acqsys} shows the singular values of the
associated operators $\bL$ at the operating frequency
$\omega=2\pi$. The stairwise distribution and the increase of
singular values with the order $K$ in Figure~\ref{fig:svd_acqsys}
(a) confirm the theoretical result in \eqref{eq:sing_vl}. It can
be seen that at the low-order (\eg $K\leq 30$) the operator $\bL$
is numerically well-conditioned. On the contrary, as shown in
Figure~\ref{fig:svd_acqsys} (b) the situation in the limited
aperture of view is dramatically different: the operator is
ill-conditioned even for very low-order (\eg, $K=5$), which means
the reconstruction of scattering coefficients is highly unstable
with such an acquisition
system. 

\def\figwidth{7cm}
\begin{figure}[htp]
  \centering
  \subfigure[Full view]{\includegraphics[width=\figwidth]{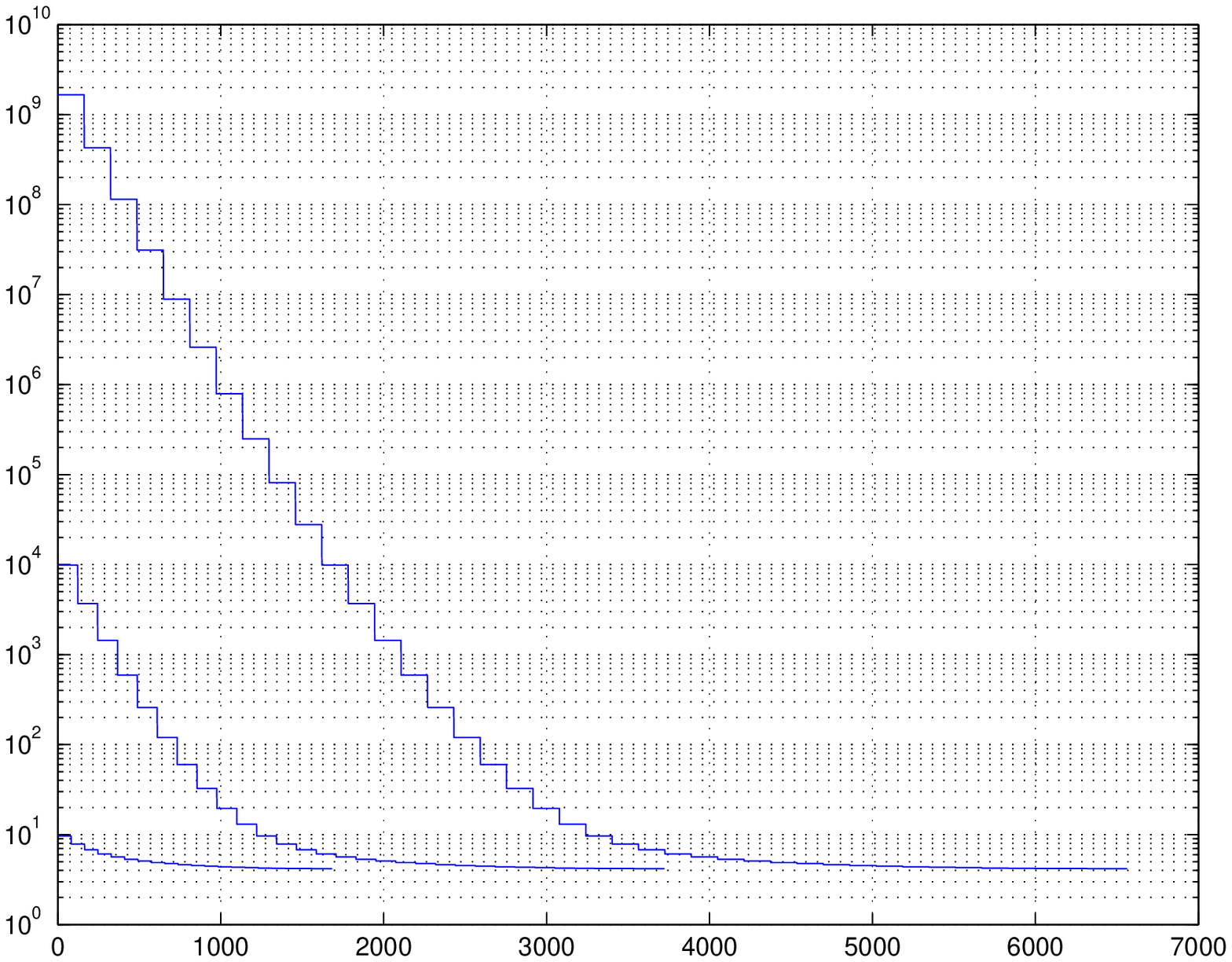}}
  \subfigure[Limited view]{\includegraphics[width=\figwidth]{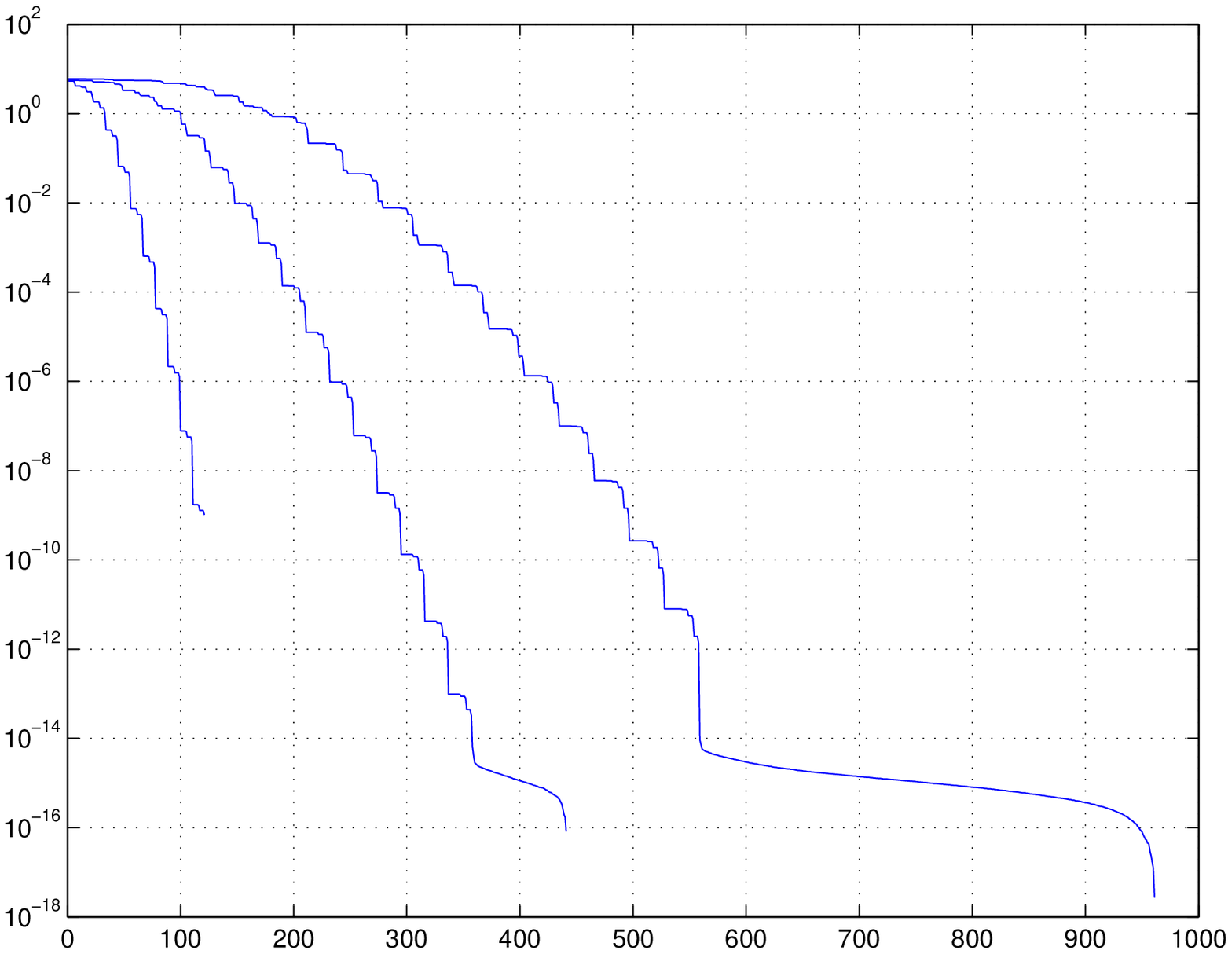}}
  \caption{Singular values of the operator $\bL$ of the acquisition systems of Figure~\ref{fig:acqsys}
    (a) and (b) at the frequency $\omega=2\pi$. The curves from top to bottom correspond to the
    operators $\bL$ of order $K=40, 30, 20$ in (a), and $K=15, 10, 5$ in (b), respectively. }
  \label{fig:svd_acqsys}
\end{figure}

\subsubsection{Resolving order $K$}
The maximal resolving order $K$ given by
\eqref{eq:max_resolving_order_bound} is an asymptotic bound which
holds when both the radius $R$ and $K$ are large, and it might be
too pessimistic for the numerical range that we are interested in.
In practice, the maximal resolving order can be determined by a
tuning procedure.  For the system in Figure~\ref{fig:acqsys} (a), we
vary the noise level and reconstruct the matrix $\West$ at
different truncation order $K$. Figure~\ref{fig:resolv_ord} plots
the relative error of the least-squares reconstruction
$\norm{\West-\W}_F/\norm{\W}_F$ as a function of $K$. It can be
seen that in the full aperture case, the reconstruction is rather
robust and with $20\%$ of noise the resolving order $K$ can go
beyond $30$ by bearing only $10\%$ of error. On the contrary, the
limited aperture of view can not provide any stable
reconstructions even at very low noise level and small truncation
order.

\def\figwidth{7cm}
\begin{figure}[htp]
  \centering
  \subfigure[Full view]{\includegraphics[width=\figwidth]{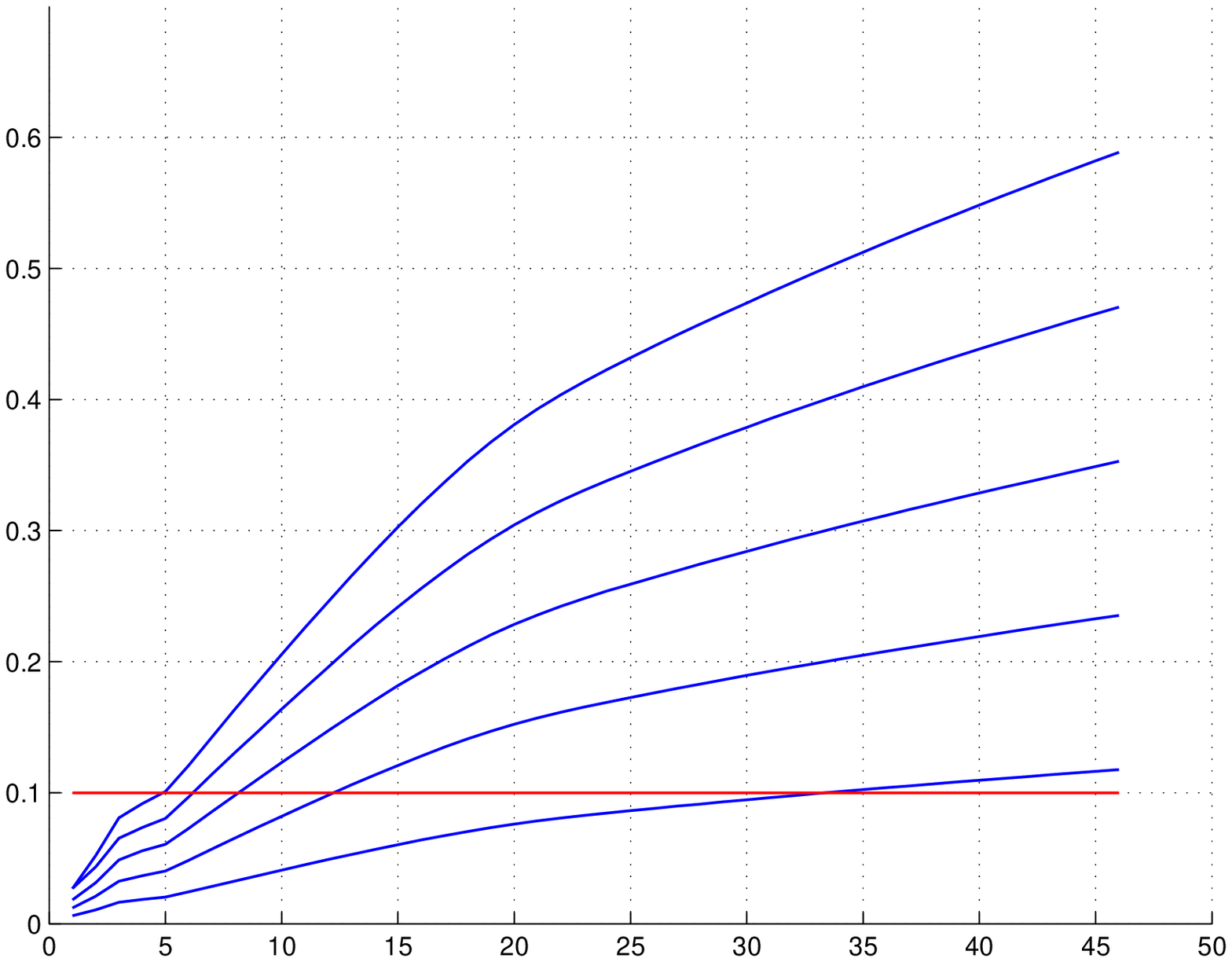}}
  \subfigure[Limited view]{\includegraphics[width=\figwidth]{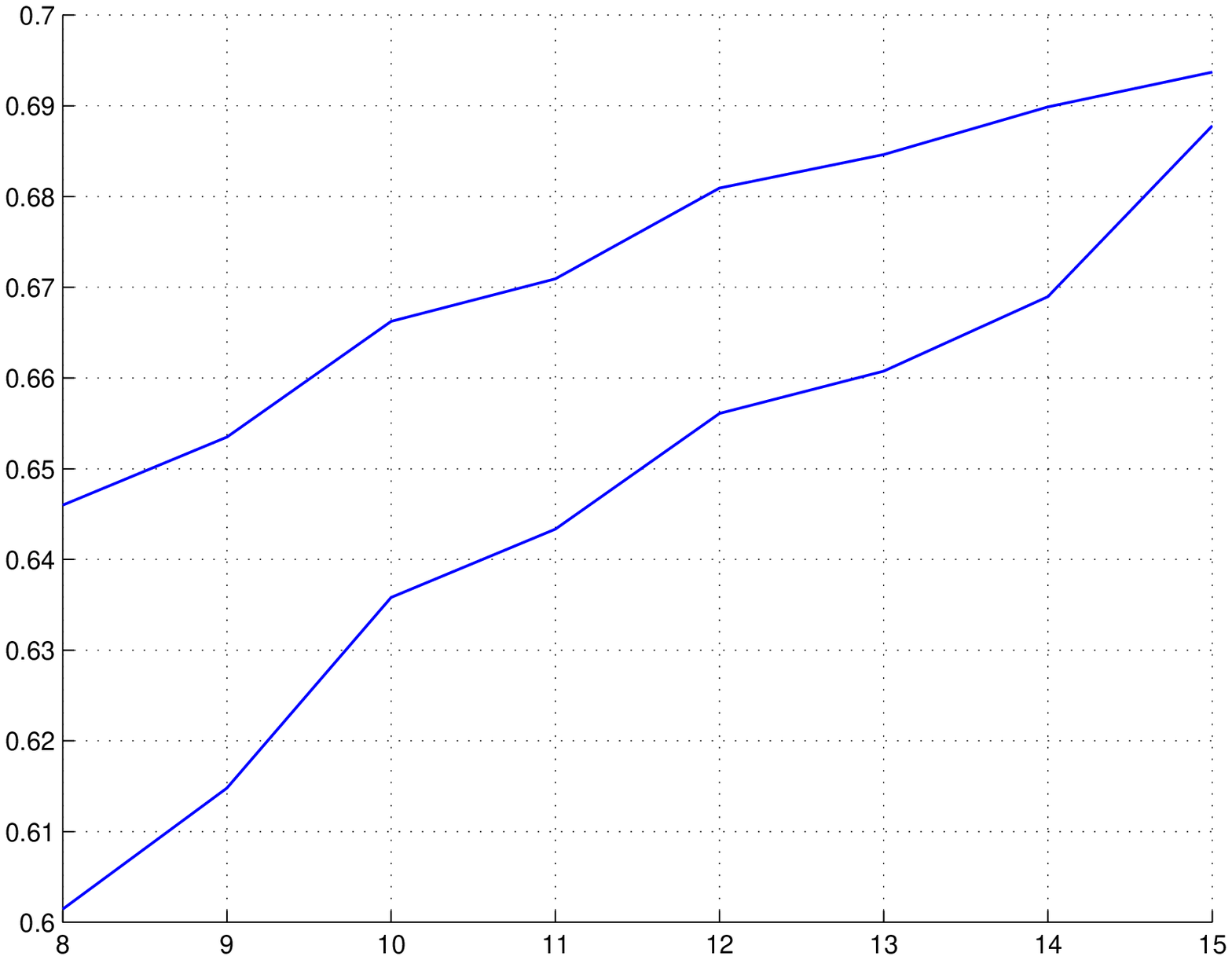}}
  \caption{Relative error of the least-squares reconstruction $\norm{\West-\W}_F/\norm{\W}_F$ for the
    systems in Figure~\ref{fig:acqsys} at different truncation order $K$. The curves from bottom to
    top correspond to respectively: (a) percentage of noise $\sigma_0=20\%, 40\%, 60\%, 80\%$ and
    $100\%$, (b): $\sigma_0=2.5\%$ and $5\%$.}
  \label{fig:resolv_ord}
\end{figure}

\subsection{Shape identification with full aperture of view}
\label{sec:shape-ident-with-full-view} Here we present results of
shape identification obtained using the full aperture of view
setting Figure~\ref{fig:acqsys} (a). For each shape $B_n$ of the
dictionary, we take the three steps mentioned in the beginning of
Section~\ref{sec:numer-exper} on a target $D$ obtained by
transform $B_n$ with the parameters $z=[-0.5,0.5]^\top, s=1.5,
\theta=\pi/3$. The order $K$ is set to 30.

The computation of the error $\ve(D,B_n)$ is represented in Figure~\ref{fig:ident_full_view} by error bars, where the
$m$-th error bar in the $n$-th group corresponds to the error $\ve(D,B_m)$ of the identification
experiment using the generating shape $B_n$. The error bars are arranged in the same way as in 
Figure \ref{fig:8shapes}. The shortest bar in each group is the identified shape
and is marked in green, while the true shape is marked in red in case that the identification
fails. It can be seen that the identification succeeded for all shapes with $20\%$ of noise, and it
failed only for the rectangle with $40\%$ of noise. The estimated scaling factor computed as in (\ref{eq:err_DB}) is close to the
true value $1.5$. The discrepancies between them  are displayed in Figure~\ref{fig:sclest_full_view}.

\def\figwidth{7.5cm}
\begin{figure}[htp]
  \centering
  \subfigure[$\sigma_0=20\%$]{\includegraphics[width=\figwidth]{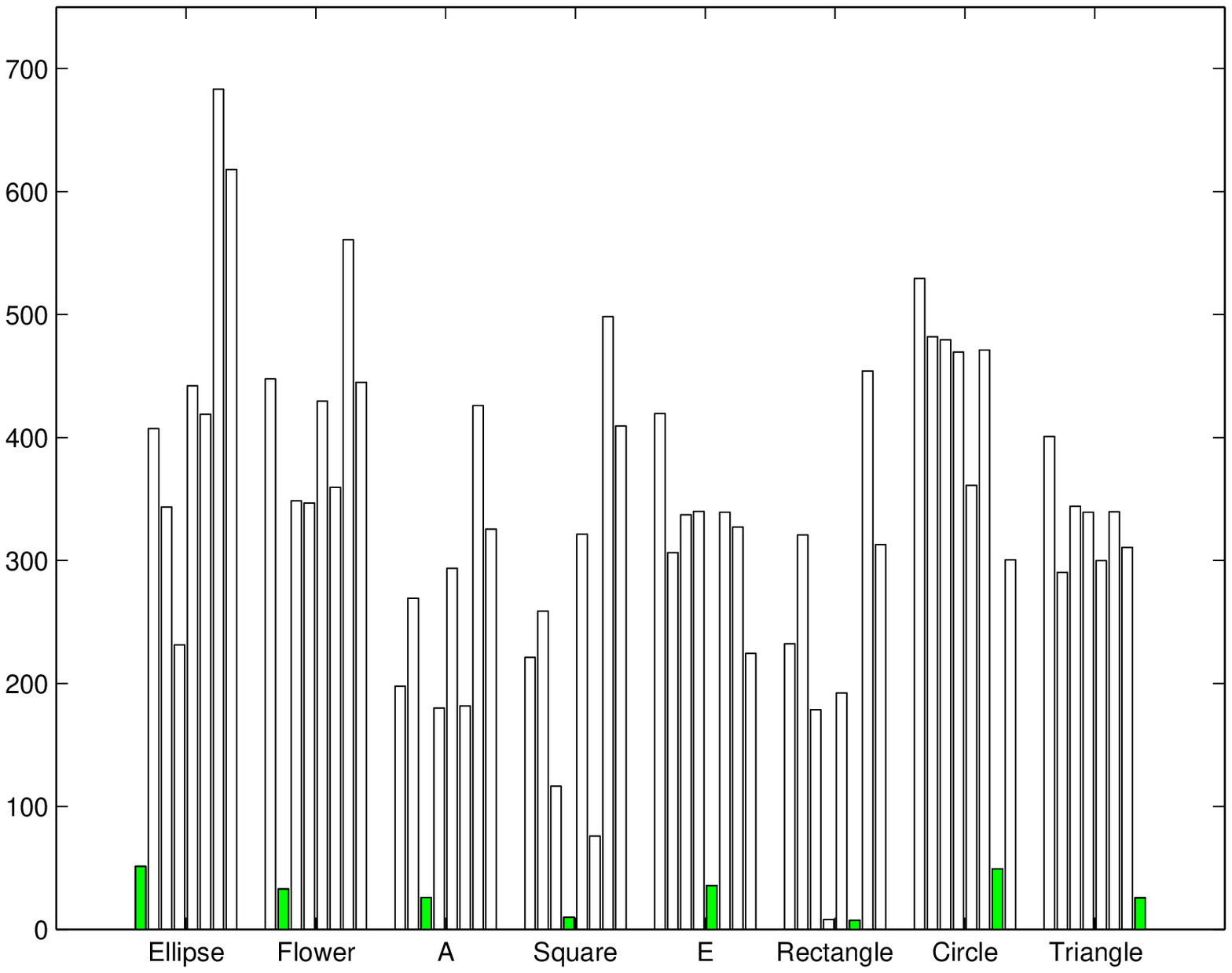}}
  \subfigure[$\sigma_0=40\%$]{\includegraphics[width=\figwidth]{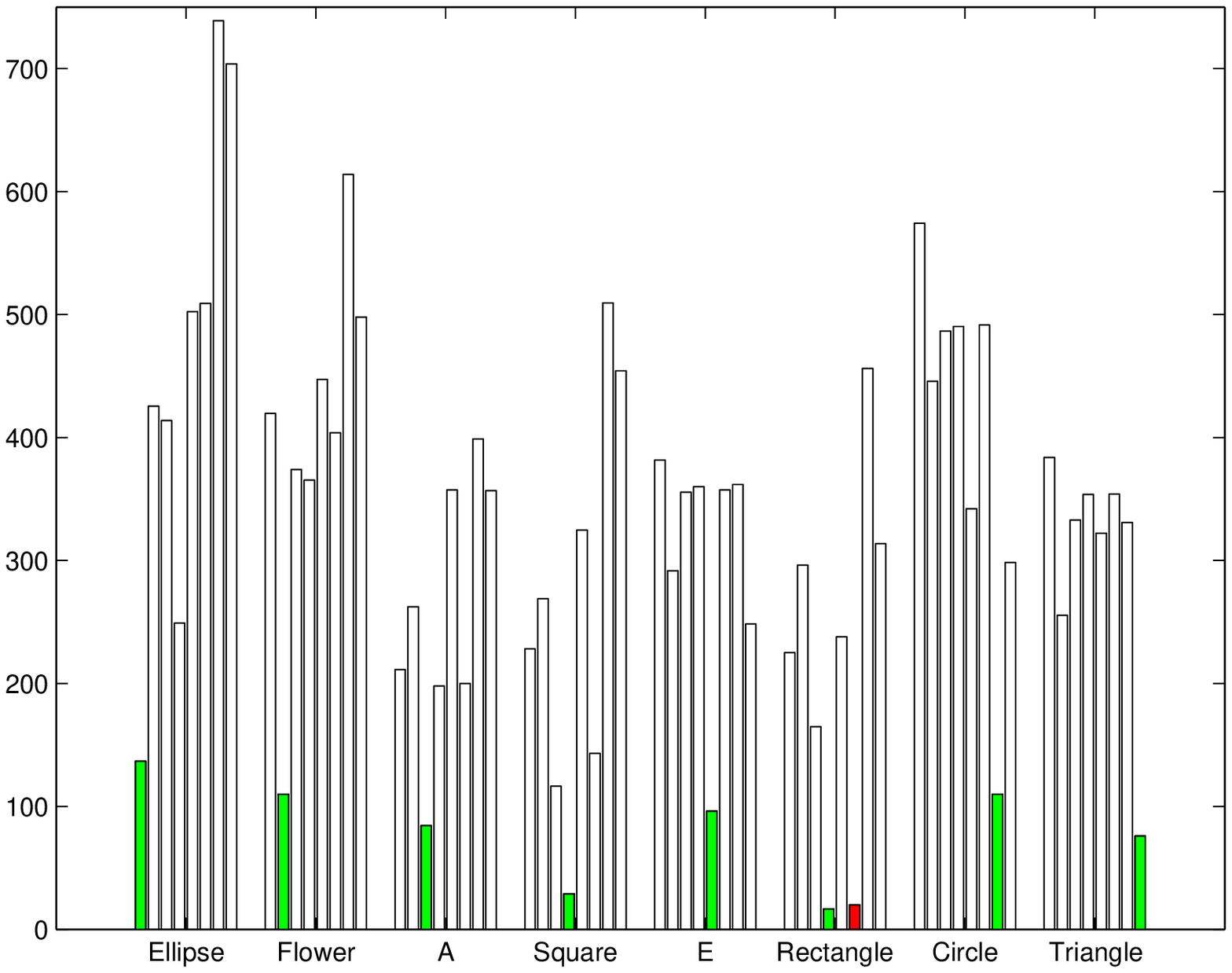}}
  \caption{Results of identification $\ve(D,B_n)$ for all shapes of the dictionary using the full
    aperture of view setting with (a) $\sigma_0=20\%$ and (b) $\sigma_0=40\%$ of noise. All shapes
    were correctly identified in (a), and the rectangle was identified as the square in (b).}
  \label{fig:ident_full_view}
\end{figure}

\def\figwidth{6.5cm}
\begin{figure}[htp]
  \centering
  \subfigure[$\sigma_0=20\%$]{\includegraphics[width=\figwidth]{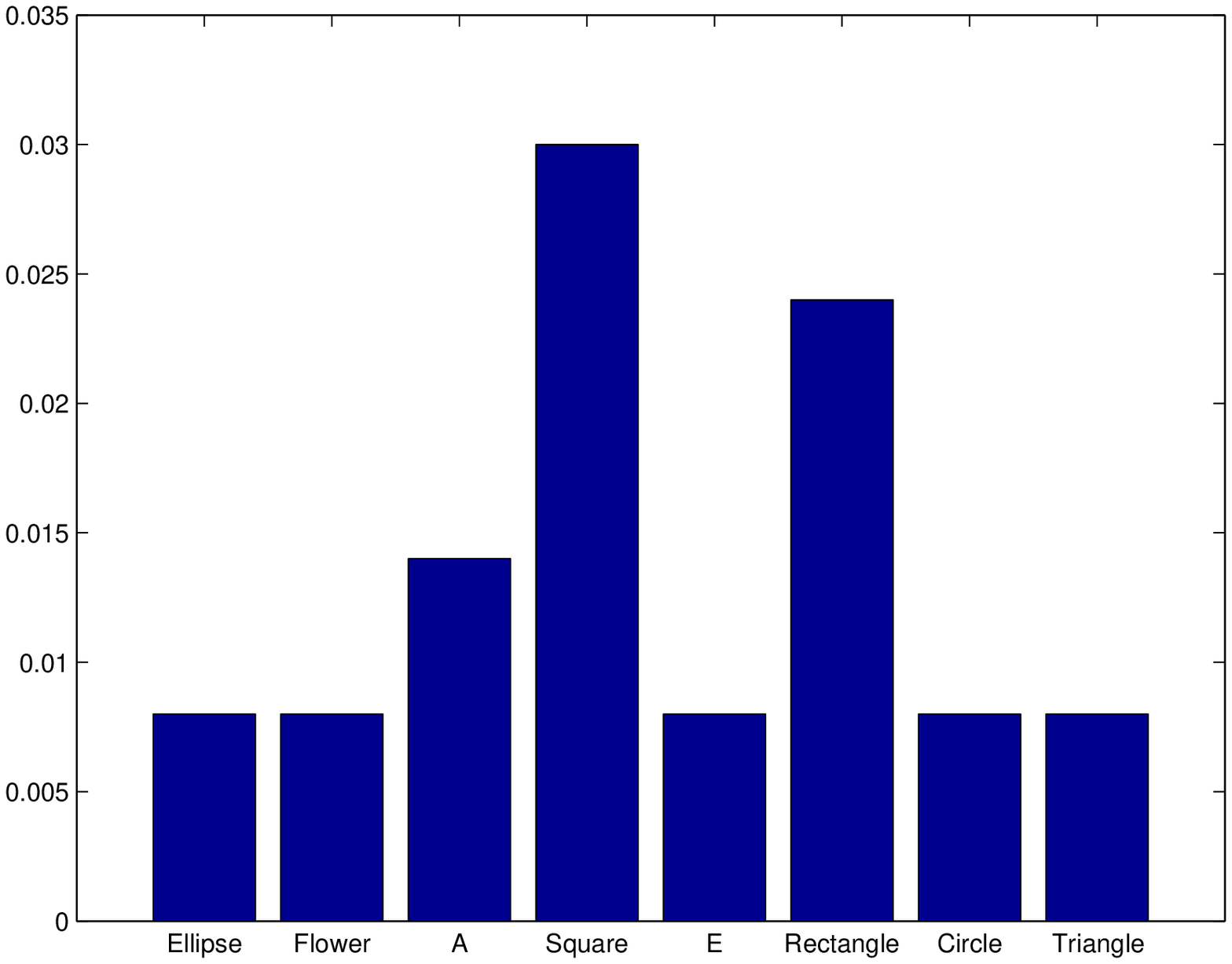}}
  \subfigure[$\sigma_0=40\%$]{\includegraphics[width=\figwidth]{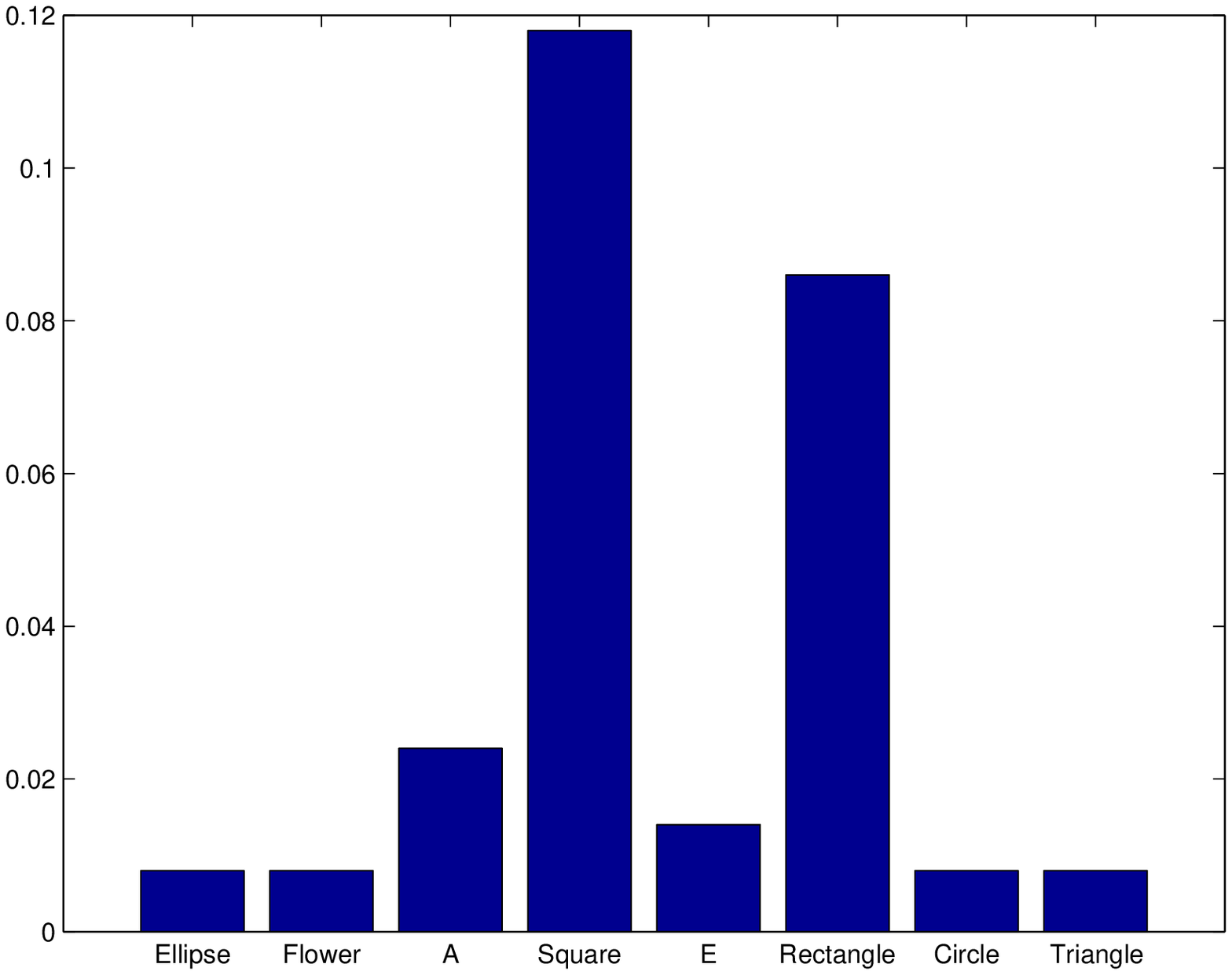}}
  \caption{Difference between the estimated scaling factor and the true one ($s=1.5$) for the
    identified shapes in Figure~\ref{fig:ident_full_view}.}
  \label{fig:sclest_full_view}
\end{figure}


\subsection{Shape identification with limited aperture of view}
\label{sec:shape-ident-with-lim-view}
We use a limited view system as Figure~\ref{fig:acqsys} (b) of small aperture angle $\alpha$, with 512
source/receiver groups (the number of sources/receivers in each group is about $512\times
\alpha/2\pi$) uniformly distributed on a circle of radius $R=10$ around the target. Visually
speaking, the modulus of the measurement $\abs{V_{sr}}$ is similar to that shown in
Figure~\ref{fig:ffpattern_Vsr} with all entries set to zero except the band diagonal (in red).

As analyzed in Section~\ref{sec:exp-parm-settings}, in this case one can not expect to reconstruct
the scattering coefficients with high precision, and consequently compute neither the full far-field
pattern via \eqref{eq:Fourier_SCT} nor the shape descriptor \eqref{eq:Invar_S}. However, thanks to
the relation \eqref{eq:Vsr_far_field} the measurement $\abs{V_{sr}}$ gives directly the band
diagonal part of the far-field pattern $\abs{\Af_D}$, and one can compute the shape descriptors from
the partial far-field pattern hence apply the identification algorithm as described in
Section~\ref{sec:shape-descr-partial}.

The results of identification with the limited aperture are shown in Figure~\ref{fig:ident_lim_view},
where all shapes were identified correctly with $\alpha=\pi/3$ and only the square was missed with
$\alpha=\pi/6$. The performance may deteriorate when the dictionary contains more shapes. In that
case one should increase the aperture angle and broaden the range of operating frequency $[\omin,
\omax]$ in order to compensate the lose of information in the incomplete far-field pattern.

\def\figwidth{7.5cm}
\begin{figure}[htp]
  \centering
  \subfigure[$\alpha=\pi/6$]{\includegraphics[width=\figwidth]{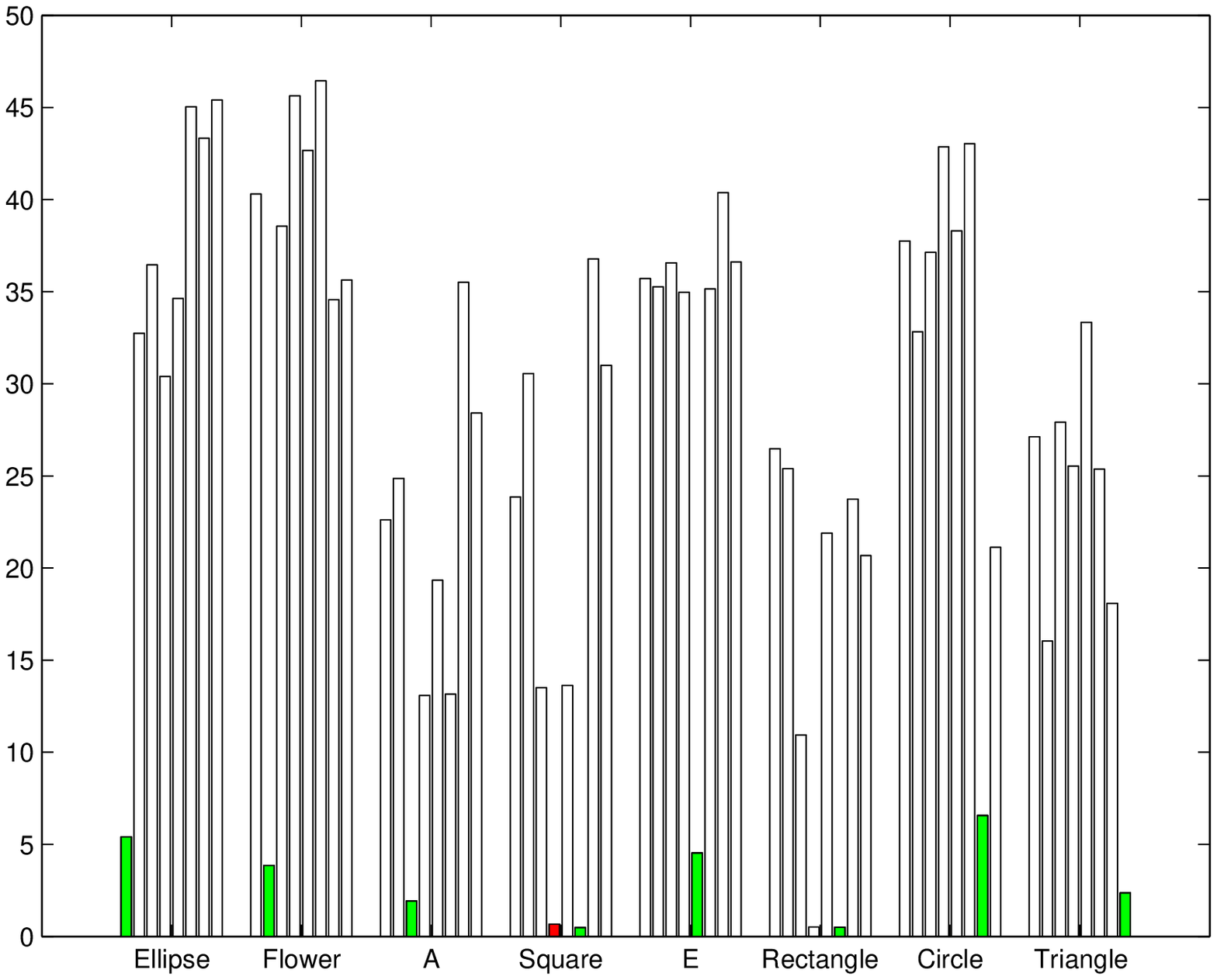}}
  \subfigure[$\alpha=\pi/3$]{\includegraphics[width=\figwidth]{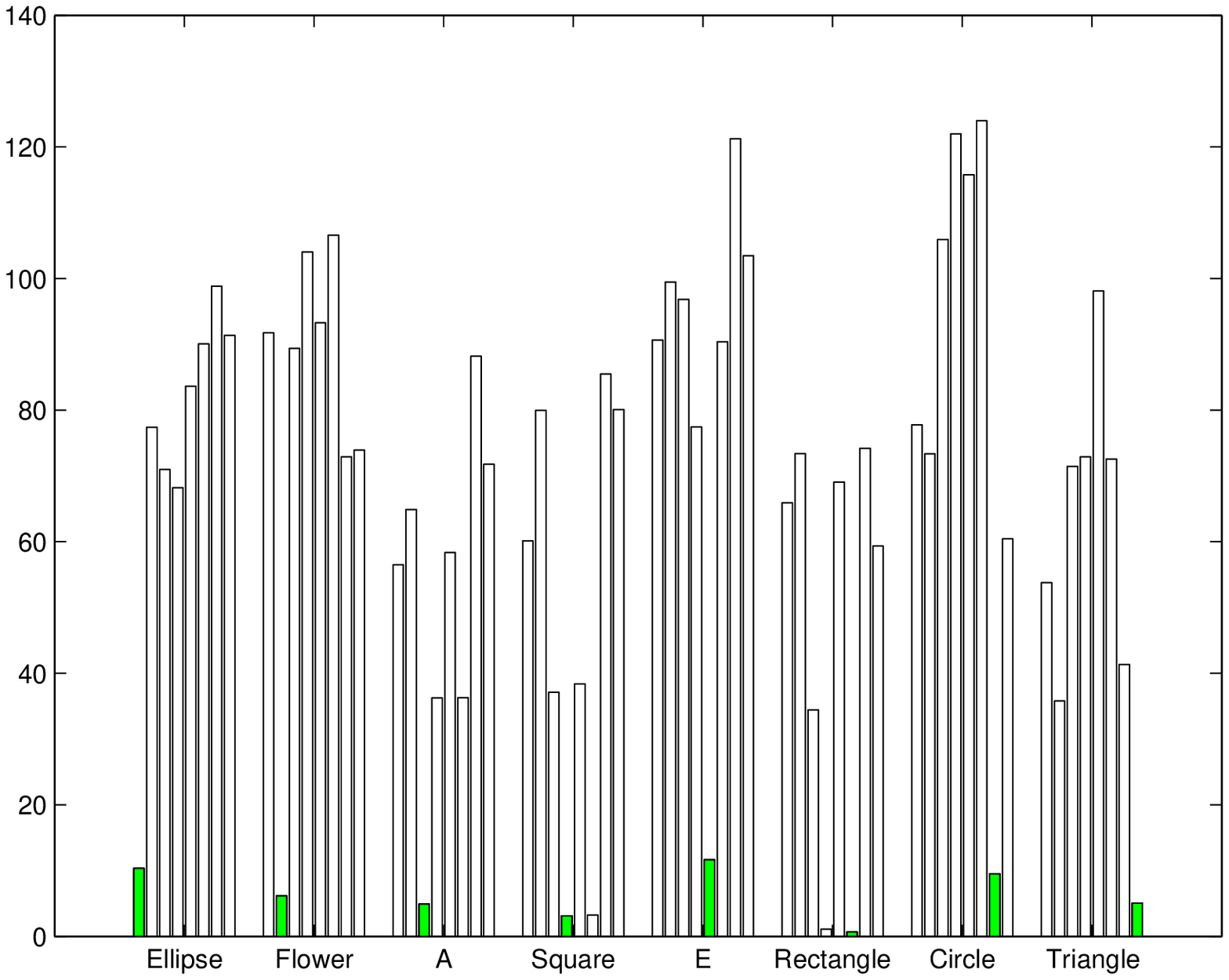}}
  \caption{Results of identification in the limited angle of view setting with angular aperture
    (a) $\alpha=\pi/6$ and (b) $\alpha=\pi/3$. $\sigma_0=20\%$ of noise is added in both cases.}
  \label{fig:ident_lim_view}
\end{figure}

\section{Conclusion}
\label{sec:conclusion}

In this paper we presented a framework of shape identification
using the frequency-dependent dictionary of shape descriptors,
which is invariant to rigid transforms and allows to handle the
scaling within certain ranges. The shape descriptor is based on
the far-field pattern which can be either computed from the
scattering coefficients or read off directly from the measurement.
We analyzed the stability of the reconstruction and presented
results of identification with both full and limited aperture of
view. Our approach in this paper can be used for  tracking the location and the orientation of a target from acoustic echoes. The generalization of our work on target tracking in electrosensing \cite{ammari_tracking_2012} to echolocation will be the subject of a forthcoming paper.

\appendix

\section{Proof of \eqref{eq:xpx_sum}}
\label{sec:proof-of-xpx-sum}

\begin{proof}  
  Let $f(t):=\Paren{\frac c t}^t$ be defined on $\R^+$. Remark that $f$ is decreasing for $t\geq
  c/e$. Therefore, if $k\in\N, k> c/e$, the sum can be bounded by
  \begin{align*}
    \sum_{m>k} \Paren{\frac c m}^m \leq \int_k^{+\infty} f(t) dt = I.
  \end{align*}
  On the other hand, for any $x>1$, we define the function $g(t):=f(t)x^t$ which attains at
  $t=cx/e$ its maximum  $e^{x(c/e)} = a^x$. Then,
  \begin{align*}
    I = \int_k^{+\infty} g(t) x^{-t} dt \leq a^x \int_k^{+\infty} x^{-t} dt = a^x \Paren{\frac{x^{-k}}{\ln x}}.
  \end{align*}
  Since $x>1$ is arbitrary, we deduce
  \begin{align*}
    I \leq \inf_{x>1} a^x \Paren{\frac{x^{-k}}{\ln x}}  \leq \inf_{x\geq ke/c} a^x \Paren{\frac{x^{-k}}{\ln
        x}} \leq \Paren{\inf_{x\geq ke/c} a^x x^{-k}} \Paren{\frac{1}{\ln(ke/c)}}.
  \end{align*}
  The function $a^x x^{-k}$ is convex on $x>1$ and attains  at $x=ke/c>1$ its minimum $(c/k)^k$. Putting everything together, we finally obtain
  \begin{align*}
    I \leq \Paren{\frac c k}^k  \Paren{\frac{1}{1+\ln(k/c)}}
  \end{align*}
  as desired. 
\end{proof}


\bibliographystyle{plain}
\bibliography{Biblio}
\end{document}